\crefname{section}{§}{§§}
\Crefname{section}{§}{§§}
\newcommand\C{\mathbb{C}}
\newcommand{\complex}{\mathbb{C}}
\newcommand\N{\mathbb{N}}
\newcommand\Z{\mathbb{Z}}
\newcommand\R{\mathbb{R}}
\newcommand\g{\mathfrak{g}}
\newcommand\Ad{\operatorname{Ad}}
\newcommand\tr{\operatorname{tr}}
\newcommand\re{\operatorname{Re}}
\newtheorem{thm}{Theorem}[section]
\newtheorem{prop}[thm]{Proposition}
\newtheorem{lemma}[thm]{Lemma}
\newtheorem{cor}[thm]{Corollary}
\theoremstyle{definition}
\newtheorem{defn}{Definition}
\theoremstyle{remark}
\newtheorem{remark}{Remark}
\numberwithin{equation}{section}
\begin{document}
\title{Character Estimates of Adjoint Simple Lie Groups}
\author{Corey Manack}
\address{Department of Mathematics\\
  Amherst College\\
  Amherst, MA 01002}
%optional
%\email{imsmart@brandeis.edu}
%\urladdr{http://www.brandeis.edu/imsmart}
%\date{April 1, 2000}
\begin{abstract}
Call a compact, connected, simple Lie group $G$ {\emph{adjoint simple}} if it is also centerless. Let $C\subset G$ be a conjugacy class. We prove the existence of an $n\in\N$, depending on $G$ but not $C$, such that $C^n$ contains a neigborhood of the identity. We then prove that a disk $D\subset\C$, with radius less than $1$, contains the image of every normalized character $\chi(e)^{-1}\chi$ of $G$.
\end{abstract}

\maketitle
\section{Introduction, Statement of Results}

We say that a compact, connected Lie group $G$ is $\textit{simple}$ if its Lie algebra is simple, and $G$ is $\textit{adjoint simple}$ if $G$ is simple and centerless. Let $\g=T_e(G)$ be the Lie algebra of $G$, $(\pi,V)$ a finite-dimensional representation of $G$, $\chi = \tr \pi$ the {\emph{character}} of $\pi$, $e$ the identity element of $G$, $\dim V=\chi(e)$ the {\emph{degree}} of $\pi$. The quantity $\chi(g)/\chi(e)\in\complex$ is the $\textit{normalized character}$ of $V$ at $g\in G$ which is the average of the eigenvalues of $\pi(g)$. By compactness, a normalized character takes values in the closed unit disk in $\complex$. When $G$ is adjoint simple, we can say more.

\begin{thm}
\label{thm:smalldisk}
Let $G$ be adjoint simple. Then there exists a real constant $c$, $-1< c < 0$, depending only on $G$, such that the image of any normalized character is contained in the disk tangent to the line $\re z = c$, and the unit circle.
\end{thm}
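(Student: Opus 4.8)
The plan is to recast the geometric conclusion as a single scalar inequality, reduce to irreducible characters, and then handle representations of large and of small degree separately, the conjugacy-class-power theorem being needed only for the former.

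\emph{Reduction.} Let $D_c\subset\C$ be the closed disk with diameter $[c,1]$; it is tangent to the line $\re z=c$ (at $z=c$) and internally tangent to the unit circle (at $z=1$), and a point $z$ of the closed unit disk lies in $D_c$ if and only if $z=1$ or $\frac{1-|z|^2}{1-\re z}\ge 1+c$. For an irreducible $\pi$ of degree $N$ and $g\in G$, write $z=z_\pi(g)=\chi_\pi(g)/N$ and let $\lambda_1,\dots,\lambda_N$ be the unit-modulus eigenvalues of $\pi(g)$; then $1-\re z=\tfrac1{2N}\|I-\pi(g)\|_{\mathrm{HS}}^2$ and $1-|z|^2=\tfrac1N\sum_j|\lambda_j-z|^2=\tfrac1N\min_{\alpha\in\C}\|\pi(g)-\alpha I\|_{\mathrm{HS}}^2$, so the quantity to bound below is $\frac{2\operatorname{dist}_{\mathrm{HS}}(\pi(g),\C I)^2}{\|\pi(g)-I\|_{\mathrm{HS}}^2}$, and in particular $\frac{1-|z|^2}{1-\re z}\ge 1-|z|$ in all cases. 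Since $D_c$ is convex and any normalized character value is a convex combination (with weights $n_i\dim V_{\pi_i}/\dim V$) of the values $\chi_{\pi_i}(g)/\dim V_{\pi_i}$ over the irreducible constituents $\pi_i$, the trivial ones contributing the point $1\in D_c$, it suffices to produce $\kappa\in(0,1)$ depending only on $G$ with $\frac{1-|z_\pi(g)|^2}{1-\re z_\pi(g)}\ge\kappa$ for all nontrivial irreducible $\pi$ and all $g\neq e$; then $c:=\kappa-1$ works.

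\emph{Large degree.} Here I would use the conjugacy-class-power theorem in a quantitative uniform form: there are $n\in\N$, a conjugation-invariant neighbourhood $U_0$ of $e$, and $\delta_0>0$, all depending only on $G$, so that for every conjugacy class $C$ the $n$-fold convolution $\mu_C^{*n}$ of the uniform probability measure on $C$ dominates $\delta_0$ times normalized Haar measure restricted to $U_0$. Choosing $k$ with $U_0^k=G$ and setting $M=kn$ then yields, for every $C$, $\mu_C^{*M}\ge\beta\,\rho\,dg$, where $\rho$ is a fixed bounded conjugation-invariant probability density of support $G$ (an iterated convolution of the indicator of $U_0$) and $\beta=(\delta_0\vol U_0)^k>0$. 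Since $\mu_C^{*M}$ is conjugation-invariant, Schur's lemma gives $\widehat{\mu_C^{*M}}(\pi)=z_\pi(g)^M\,I$ for any $g\in C$; writing $\mu_C^{*M}=\beta(\rho\,dg)+\sigma_C$ with $\sigma_C$ a positive measure of mass $1-\beta$, and comparing Fourier transforms -- the first summand has $\pi$-eigenvalue of modulus $\le B/\dim V_\pi$ by Parseval in the orthonormal system $\{\chi_\pi\}$ (with $B:=\|\rho\|_2$), while $\sigma_C$ contributes operator norm $\le 1-\beta$ -- one obtains $|z_\pi(g)|^M\le B/\dim V_\pi+1-\beta$ for all $\pi$ and $g$. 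Hence once $\dim V_\pi$ exceeds a constant depending only on $G$, $|z_\pi(g)|\le(1-\beta/2)^{1/M}=:1-\eta<1$, and then $\frac{1-|z_\pi(g)|^2}{1-\re z_\pi(g)}\ge 1-|z_\pi(g)|\ge\eta$ for all such $\pi$ and all $g$.

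\emph{Small degree, and conclusion.} Only finitely many nontrivial irreducible $\pi$ have $\dim V_\pi$ below that constant; fix one. Because $G$ is adjoint simple, $\ker\pi$ is a closed normal subgroup whose identity component is $\{e\}$ or $G$; the latter would force $\pi$ trivial, so $\ker\pi$ is discrete, hence central, hence trivial, and $\pi$ is faithful. Thus $\pi(g)$ is non-scalar for $g\neq e$, so $\frac{1-|z_\pi(g)|^2}{1-\re z_\pi(g)}$ is defined and strictly positive on $G\setminus\{e\}$. As $g=\exp X\to e$, the eigenvalues of $-i\,d\pi(X)$ are reals $s_j$ with $\sum_j s_j=0$ (since $\tr d\pi(X)=0$, as $\g=[\g,\g]$), and to second order $1-\re z_\pi(g)=\tfrac1{2N}\sum_j s_j^2+o(\sum_j s_j^2)$ and $1-|z_\pi(g)|^2=\tfrac1N\sum_j s_j^2+o(\sum_j s_j^2)$, so the ratio tends to $2$; hence it extends to a continuous positive function on the compact group $G$ and is bounded below on $G\setminus\{e\}$ by some $\epsilon_\pi>0$. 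Taking $\kappa:=\min\bigl(\eta,\ \min_\pi\epsilon_\pi\bigr)\in(0,1)$ over this finite set, and $c:=\kappa-1\in(-1,0)$, proves the theorem (the value $z=1$ at $g=e$, and hence all of $D_c$, being handled by the convexity reduction above).

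\emph{The main obstacle} is the quantitative, uniform-in-$C$ strengthening of the conjugacy-class-power theorem invoked above. Its plain form -- ``$C^n$ contains a neighbourhood of $e$'' -- is proved by exhibiting the product-of-$n$-conjugates map $G^n\to G$ as a submersion at a point lying over $e$, which already gives $\mu_C^{*n}$ a smooth positive density near $e$; the extra content needed is a lower bound on the relevant differentials, hence on that density, uniform in $C$, which I would obtain from a compactness argument over the space $T/W$ of conjugacy classes (together with the routine reduction making $U_0$ conjugation-invariant). Securing that uniformity honestly is the crux; the near-identity expansion in the small-degree step is routine for each fixed $\pi$, and everything else is bookkeeping.
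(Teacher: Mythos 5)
Your reduction is correct and attractive: a point $z$ of the closed unit disk lies in the disk $D_c$ tangent to $\re z=c$ and the unit circle iff $z=1$ or $\frac{1-|z|^2}{1-\re z}\ge 1+c$, and convexity of $D_c$ reduces the problem to a uniform lower bound on this ratio over nontrivial irreducible $\pi$ and $g\neq e$. The small-degree step (faithfulness from adjoint simplicity, positivity of the ratio on $G\setminus\{e\}$, and the second-order expansion near $e$ giving limit $2$) is also sound. But the large-degree step rests on a ``quantitative uniform'' version of Theorem \ref{thm:Cnopen} that is false, not merely unproved. If $C$ passes through $\exp(tX)$ with $X$ fixed and $t\to 0$, then $C^{n}$ is contained in an $O(nt)$-ball about $e$, so $\mu_C^{*n}$ assigns zero mass to all of $U_0$ outside that shrinking ball, and no fixed pair $(U_0,\delta_0)$ can give $\mu_C^{*n}\ge\delta_0\cdot\mathrm{Haar}|_{U_0}$ uniformly in $C$. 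The failure propagates: your bound $|z_\pi(g)|^M\le B/\dim V_\pi+1-\beta$, asserted for all $g$, would at $g=e$ force $\dim V_\pi\le B/\beta$ for every irreducible $\pi$; and the intermediate goal $|z_\pi(g)|\le 1-\eta$ uniformly in $g\neq e$ cannot hold for any single nontrivial $\pi$, since $z_\pi(g)\to 1$ as $g\to e$. Compactness over $T/W$ does not repair this --- the trivial class is in the closure and the densities genuinely degenerate there.

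The dichotomy by $\dim\pi$ is not how the difficulty is organized; the hard regime, which your split never addresses, is $\theta:=\arg z_\pi(g)$ small and nonzero while $1-|z_\pi(g)|$ is also small, uniformly over all $\pi$. What is actually needed is an inequality $1-|z_\pi(g)|\ge a\,\theta^2$ with $a>0$ depending only on $G$. The paper extracts this from Theorem \ref{thm:Cnopen} in its unadorned form: choose $y_1,\dots,y_k\in C$ with $y_1\cdots y_k=e$ and $k$ of size about $1/\theta$ so that $\re\omega^k\le 0$ (pigeonhole on $k\theta\bmod 1$), then use the Frobenius-norm telescoping estimate $\|\pi(y_1)\cdots\pi(y_k)-\omega^k I\|\le k\sqrt{2N\delta}$ together with $\pi(y_1)\cdots\pi(y_k)=I$ and $|\omega^k-1|\ge\sqrt 2$ to conclude $\delta\ge 1/k^2$. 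That single estimate covers all degrees at once; your reduction and the near-identity expansion are compatible with it, but the large-degree step as written must be discarded.
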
 
\begin{figure}
\label{chidisk}
\includegraphics{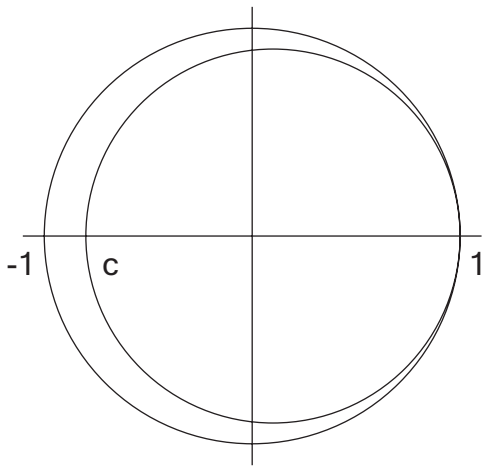}
\caption{Disk of values for $\frac{\chi(g)}{\chi(e)}$.}
\label{Fig:Disk}
\end{figure}
See Figure \ref{Fig:Disk}. The condition that $G$ is adjoint simple is essential. Let $Z(G)$ be the center of $G$. If $G$ is simple then $|Z(G)|$ is finite; if $|Z(G)|>1$, there is a complex irreducible representation $\pi$ and nontrivial $x\in Z(G)$, such that $\pi(x) \neq I$ (\cite{Fulton-Harris:1991} pg. $438$, Prop, $23.13$). As $\pi(x)$ commutes with all $\pi(g)$ and $\pi$ irreducible, $\pi(x) =  \lambda I$ by Schur's Lemma. As $G$ is compact, $|\lambda| = 1$, and since $x$ has finite order, $\lambda$ is a $k$-th root of unity $\neq 1$. Taking traces, $\lambda = \chi(x)/\chi(e)$ but Theorem \ref{thm:smalldisk} implies that the image of $\chi(e)^{-1}\chi$ avoids points on the unit circle, except at $1$. 

The proof of Theorem \ref{thm:smalldisk} relies on the following asymptotic structure theorem for products of $n$ elements from a conjugacy class $C\subset G$. Denote by $C^n$ the set of $n$-fold products from $C$, i.e.,
\[C^n = \{x_1x_2\cdots x_n\mid x_1,x_2,\ldots,x_n\in C\}.\]  
\begin{thm}
\label{thm:Cnopen}
Let $G$ be a compact, adjoint simple Lie group. For $n$ sufficiently large, any nontrivial conjugacy class $C\subset G$ has the property that $C^n$ contains the identity as an interior point.
\end{thm}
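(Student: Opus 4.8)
The plan is to realize $C^n$ as the image of a smooth map, locate a submersion point of that map lying over $e$, and then worry about uniformity in $C$. First I would fix $x\in C$ with $x\ne e$ and consider the smooth map $\mu_n\colon G^n\to G$, $\mu_n(g_1,\dots,g_n)=(g_1xg_1^{-1})\cdots(g_nxg_n^{-1})$, so that $\image\mu_n=C^n$. Writing $c_i=g_ixg_i^{-1}$ and $q_i=c_1\cdots c_{i-1}g_i$ (with $q_1=g_1$), a routine computation in the right trivialization of $TG$ shows that the image of $d\mu_n$ at $(g_1,\dots,g_n)$ is $\sum_{i=1}^n\Ad(q_i)W$, where $W=\image(1-\Ad(x))$ — equivalently, for an invariant inner product on $\g$, the orthocomplement of the fixed subalgebra of $\Ad(x)$ — which is nonzero precisely because $x\ne e$ and $G$ is centerless. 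Since $(q_1,\dots,q_n)$ can be prescribed arbitrarily (solve for the $g_i$ successively), $\mu_n$ is a submersion somewhere if and only if there exist $q_i\in G$ with $\sum_i\Ad(q_i)W=\g$.

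Next I would extract openness, uniformly in $C$. Because $\g$ is simple, $\Ad$ is irreducible, so the $G$-orbit of any nonzero vector spans $\g$; by compactness of the unit sphere $S(\g)$ there is an integer $N=N(G)$ such that every nonzero $v\in\g$ already has $N$ translates $\Ad(q_1)v,\dots,\Ad(q_N)v$ spanning $\g$. Applied to a nonzero $v\in W$, this produces a point at which $\mu_N$ is a submersion, so $C^N=\image\mu_N$ has nonempty interior; and since $C$ is conjugation-invariant, so is $C^N$, whence $V:=\interior(C^N)$ is a nonempty, open, conjugation-invariant set, with $N$ independent of $C$.

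To place $e$ in the interior of some power of $C$, note that $V^k\subseteq C^{Nk}$ for all $k$ and that $\bigcup_{k\ge1}V^k$ is an open, conjugation-invariant subsemigroup of $G$. Its closure is a closed subsemigroup of the compact group $G$, hence a subgroup, and being normal and nontrivial it equals $G$; thus $\bigcup_kV^k$ is dense, and a dense open subsemigroup of a compact group is the whole group (it and its inverse are both dense and open, so their intersection is a nonempty symmetric open subsemigroup, hence an open subgroup, hence $G$). Therefore $\bigcup_kV^k=G$, so $e\in V^{k_0}$ for some $k_0$, and since $V^{k_0}$ is open, $e\in\interior(C^{Nk_0})$. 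This proves the statement for each fixed $C$, the only defect being that $k_0$ a priori depends on $C$.

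The remaining — and main — difficulty is to remove that dependence. For classes bounded away from the identity, which form a compact family, I would argue by compactness and stability: the conclusion $e\in\interior(C^n)$ should persist under small perturbations of $C$, but to see this one needs $e$ to be the image of a genuine submersion point of some $\mu_n$, not merely an interior point of $\image\mu_n$; this one arranges by invoking Sard's theorem to work with regular values in the density argument above, after which compactness yields a uniform $n$ (indeed one expects $C^n=G$ throughout this range). The genuinely delicate case is $C$ near the identity, where the space of classes is not compact: writing $x=\exp\xi$ with $\xi$ small, one has $C^n=\{\exp\eta_1\cdots\exp\eta_n:\eta_i\in\Ad(G)\xi\}$, and Baker--Campbell--Hausdorff reduces matters to showing that $0$ lies in the interior of the $n$-fold Minkowski sum of the adjoint orbit $\Ad(G)\xi$ — which holds with $n$ depending only on the direction $\xi/|\xi|\in S(\g)$ (uniformly, by compactness of $S(\g)$ and the fact that $0$ is the $G$-average of the orbit, hence interior to its convex hull), and transfers back to $G$ by a quantitative inverse function theorem. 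Taking $n$ to be a common bound for the two regimes completes the proof; I expect reconciling the transversality input with these two compactness reductions to be where the real work lies.
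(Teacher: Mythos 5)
Your skeleton matches the paper's at a high level: compute the differential of the product map $\mu_n$, locate a submersion point, and split into two regimes — classes near $e$ (handled by BCH and a linearized Minkowski-sum problem in $\g$) and classes bounded away from $e$ (handled by compactness). Your argument for a \emph{fixed} $C$ — that $V=\interior(C^N)$ generates an open, normal subsemigroup whose closure is a closed normal subgroup, hence all of $G$, so $e\in V^{k_0}\subset\interior(C^{Nk_0})$ — is a clean and genuinely different route from the paper's, and buys the non-uniform statement cheaply. But you correctly flag that the uniformity in $C$ is where the real work is, and both regimes in your sketch are missing an essential ingredient, not just routine bookkeeping.

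In the near-$e$ regime you reduce to: $0$ is interior to the $n$-fold Minkowski sum $nA$ of the orbit $A=\Ad(G)(\xi/|\xi|)$, with $n$ uniform over the direction. Two things are missing. First, even for a single direction, $0\in\interior(nA)$ does \emph{not} imply $0\in\interior((n+1)A)$ (the set of working $n$ is not obviously upward-closed), so the compactness-of-$S(\g)$ argument you gesture at produces finitely many neighborhoods $U_{v_i}$ each with its own $n_i$ and no way to take a common one. The paper supplies the missing combinatorics — a bounded lattice walk staying near a ray through positive coefficients (Prop.~\ref{thm:Rn}, Lemma~\ref{thm:addvect}), combined with rational approximation of the convex coefficients (Theorem~\ref{thm:linearized}) — precisely to upgrade ``some $n$'' to ``all $n$ sufficiently large.'' Second, the transfer back to $G$ is not a generic inverse function theorem: one must control the BCH remainder $r_n$ and show it is $O(t^2)$ \emph{uniformly} on the unit sphere (Theorem~\ref{BCH}, the $l=2$ case), so that it is subordinate to the linear term $t\sum\eta_i$; this uniform estimate is what Theorem~\ref{thm:Cnopennhd} actually runs on, and your sketch does not address it. In the away-from-$e$ regime, Sard's theorem gives you a submersion point over \emph{some} regular value of $\mu_n$, but you need one over $e$ itself in order for the implicit-function stability argument to apply to nearby classes; the paper's fix is to right-translate the word map so that $(e,\dots,e)\mapsto e$ and to show, by the recursive dimension count on $L_n$, that the differential there is already surjective for $n=\dim G$. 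That small modification is what makes the compactness step in $G\setminus U_e$ go through, and it is absent from your plan.
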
    
We note that conjugacy classes of a compact analytic manifold are regular submanifolds (\cite{Varadarajan:1974} Corollary 2.9.8). If $C$ is nontrivial then $C$ has positive dimension. Intuitively, adjoint simplicity of $G$ implies the dimension of $C^n$ increases in $n$ and adjoint simplicity excludes undesirable periodic behavior in $n$.
The strategy for proving Theorem \ref{thm:Cnopen} is standard. Section \ref{sec:two} contains a linearized version of Theorem \ref{thm:Cnopen} which states, for $n$ sufficiently large and arbitrary $X$ in $\g$, one can find points $x_1,x_2,\ldots,x_n$ from $G$, such that the function
\begin{equation}
\label{lineargoal}
F(g_1,\ldots,g_n) = \Ad(g_1)X+\Ad(g_2)X+\cdots+\Ad(g_n)X 
\end{equation}
vanishes at $g_1 = x_1, \ldots g_n = x_n$, and is submersive there. In section \ref{sec:three}, we carefully exponentiate \eqref{lineargoal} to prove Theorem \ref{thm:Cnopen} for conjugacy classes in a punctured neighborhood of $e$. The remaining conjugacy classes follow by a submersion argument and compactness of $G$.  Section \ref{sec:four} contains the character estimates. Most of the work is spent proving that the constant $c$ of Theorem \ref{thm:smalldisk} is greater than $-1$. Then $c<0$ from the following observation: for any nontrivial irreducible character $\chi$, orthogonality of $\chi$ to the trivial character implies \[\int_G\re \chi(g)dg = 0.\] Since $\chi(e)>0$, $\re\chi$ must assume a negative value at some non-identity element.

\section{Sums of Adjoint Orbits}
\label{sec:two}
In this section, assume $G$ is simple. We employ tools from convex analysis \cite{Rockafellar:1970}. Throughout, assume $V$ is a real, finite-dimensional vector space. 
\begin{lemma}
\label{thm:halfspace}
Let $V$ be a real vector space and $A$ a compact, convex subset of $V$. Then either
\begin{enumerate} 
\item{ $A$ contains $0$ as an interior point, or}
\item{ $A$ is contained in a closed homogeneous half-space $H$ of $V$, i.e., $v^{*}(A)\geq 0$ for some nonzero dual vector $v^{*}\in V^*$.} 
\end{enumerate}
\end{lemma}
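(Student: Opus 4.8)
The plan is to treat this as a clean consequence of a separating-hyperplane argument applied to the point $0$ and the set $A$. First I would reduce to the case where $A$ spans $V$ as an affine set: if the affine hull of $A$ is a proper subspace $W \subsetneq V$, then $A$ has empty interior in $V$ and we can pick any nonzero $v^* \in V^*$ vanishing on $W$, which puts $A$ inside the homogeneous half-space $v^*(A) = 0 \geq 0$; so alternative (2) holds trivially. Hence assume $\operatorname{aff}(A) = V$, which (since $A$ is convex) guarantees the relative interior of $A$ is nonempty and equals the topological interior $\operatorname{int}(A)$.

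Now I would split on whether $0 \in \operatorname{int}(A)$. If it is, alternative (1) holds and we are done. If not, then $0$ lies in the closed convex set $A$'s complement of the interior, i.e.\ either $0 \notin A$ or $0 \in \partial A$. In the first case, since $A$ is compact (hence closed) and convex and $\{0\}$ is compact convex and disjoint from $A$, the strong separation theorem (\cite{Rockafellar:1970}, Theorem 11.4) gives a nonzero $v^* \in V^*$ and a real $\alpha$ with $v^*(0) < \alpha \leq v^*(a)$ for all $a \in A$; in particular $v^*(a) > 0$ on $A$, which is stronger than what (2) requires. In the second case, $0 \in \partial A$, and since $A$ is convex with nonempty interior, there is a supporting hyperplane to $A$ at $0$: a nonzero $v^* \in V^*$ with $v^*(a) \geq v^*(0) = 0$ for all $a \in A$. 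This is exactly alternative (2). Either way the dichotomy is established.

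The only real subtlety — the step I would be most careful about — is making sure the two alternatives are genuinely exhaustive, i.e.\ that "$0 \notin \operatorname{int}(A)$" always yields the supporting/separating functional. This is where the reduction in the first paragraph pays off: without knowing $\operatorname{aff}(A) = V$ one cannot invoke "nonempty interior $\Rightarrow$ supporting hyperplane at a boundary point," and the degenerate low-dimensional case has to be handled by hand (as above). A second minor point is the choice between the strong separation theorem and the supporting hyperplane theorem depending on whether $0 \in A$; packaging both as "$0$ is not in the interior of the compact convex set $A$, so some nonzero linear functional is minimized over $A$ at a value $\leq 0$ attained at or approached near $0$" is cleanest, but stating it as the two cases above is the most transparent. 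Everything else is standard finite-dimensional convexity and requires no computation.
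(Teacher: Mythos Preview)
The paper does not actually prove this lemma; it simply defers to \cite{Rockafellar:1970}. Your argument is the standard separating/supporting hyperplane route and is correct in substance, so there is nothing to compare against.

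One small imprecision worth cleaning up: in your reduction step you write ``if the affine hull of $A$ is a proper subspace $W\subsetneq V$,'' but $\operatorname{aff}(A)$ need not be a \emph{linear} subspace (it may miss $0$), so ``pick $v^*$ vanishing on $W$'' is not quite right as stated. The fix is immediate: if $\operatorname{aff}(A)$ is a proper affine subspace, it lies in some affine hyperplane $\{v^*=c\}$ with $v^*\neq 0$; if $c=0$ you are done, and if $c\neq 0$ replace $v^*$ by $(\operatorname{sgn} c)\,v^*$ to get $v^*(A)=|c|>0$. With that adjustment the proof is complete.
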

\noindent The proof of Lemma \ref{thm:halfspace} can be found in \cite{Rockafellar:1970}.
%%%%%%%%%%%%%%%%%%%%%%%%%%%%%%%%%%%%%
\begin{thm}
\label{thm:zeroinconvexhull}
Let $G$ be simple, $(\pi,V)$ a nontrivial irreducible representation of $G$. If $v\in V$ is nonzero, there exists points $g_1,\ldots, g_n$ in $G$ for which $0$ is contained in the interior of the convex hull of $\pi(g_1)v,\pi(g_2)v,\ldots,\pi(g_n)v$.
\end{thm}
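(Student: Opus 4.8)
The plan is to study the full orbit $\Omega=\{\pi(g)v : g\in G\}$, which is compact since $G$ is compact and $\pi$ is continuous, together with its convex hull. Set $A=\operatorname{conv}(\Omega)$. Since $\Omega$ is compact, Carath\'eodory's theorem shows $A$ is compact as well (it is the image of $\Delta^{d}\times\Omega^{d+1}$ under the convex-combination map, $d=\dim V$), so Lemma \ref{thm:halfspace} applies directly to $A$: either $0\in\interior A$, or there is a nonzero $v^{*}\in V^{*}$ with $v^{*}(w)\ge 0$ for every $w\in A$, in particular $v^{*}(\pi(g)v)\ge 0$ for all $g\in G$. The heart of the argument is to exclude the second alternative; once $0\in\interior A$ is in hand, extracting finitely many orbit points is routine.

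To rule out the half-space alternative, I would average against normalized Haar measure on $G$. The operator $P=\int_G \pi(g)\,dg$ is the projection of $V$ onto the subspace $V^{G}$ of $G$-fixed vectors, and since $(\pi,V)$ is a nontrivial irreducible representation of the compact group $G$ there are no nonzero fixed vectors, so $P=0$ and $\int_G \pi(g)v\,dg=0$. Applying the continuous linear functional $v^{*}$ gives $\int_G v^{*}(\pi(g)v)\,dg=0$; the integrand is continuous and everywhere $\ge 0$, hence it vanishes identically, i.e.\ $v^{*}(\pi(g)v)=0$ for all $g\in G$. Consequently the $G$-invariant subspace $W=\operatorname{span}_{\R}\{\pi(g)v:g\in G\}$ is contained in the proper subspace $\ker v^{*}$. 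But $W$ is nonzero, as it contains $v=\pi(e)v$, so irreducibility forces $W=V\subseteq\ker v^{*}$, which is absurd. Therefore $0$ is an interior point of $A$.

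It remains to pass from the orbit to a finite subset. Since $0\in\interior A$, fix $\epsilon>0$ with $\overline{B_\epsilon(0)}\subset A$ and take the vertices $w_0,\dots,w_d$ of a regular simplex inscribed in $B_\epsilon(0)$ and centered at $0$; these are affinely independent, lie in $A$, and have $0$ in the interior of their convex hull. Each $w_i\in A=\operatorname{conv}(\Omega)$ is, by Carath\'eodory, a convex combination $w_i=\sum_{j=0}^{d}\mu_{ij}\,\pi(g_{ij})v$ of at most $d+1$ orbit points. Then $0$ lies in the interior of $\operatorname{conv}\{w_0,\dots,w_d\}\subseteq\operatorname{conv}\{\pi(g_{ij})v:0\le i,j\le d\}$, so relabeling the $g_{ij}$ as $g_1,\dots,g_n$ (with $n\le(d+1)^{2}$) produces points whose convex hull contains $0$ in its interior, as required.

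The main obstacle is the middle step: showing the half-space alternative of Lemma \ref{thm:halfspace} cannot occur. Everything turns on combining the vanishing of the Haar average $\int_G\pi(g)v\,dg$ (equivalently, the absence of invariant vectors) with the sign condition $v^{*}(\pi(g)v)\ge 0$ to force the entire orbit into $\ker v^{*}$, and then playing this against irreducibility of $(\pi,V)$. The compactness of $A$ and the final extraction of finitely many points are soft consequences of Carath\'eodory's theorem and elementary convex geometry.
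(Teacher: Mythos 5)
Your proof is correct and follows essentially the same approach as the paper: both apply Lemma~\ref{thm:halfspace} to the convex hull of the orbit, then combine Haar averaging with the fact that irreducibility forces the orbit to span $V$ in order to rule out the half-space alternative, and finish by extracting finitely many orbit points. The only difference is the direction of the contradiction — you deduce $\int_G \pi(g)v\,dg = 0$ from the absence of invariant vectors and conclude the orbit sits in $\ker v^{*}$, while the paper argues the integrand is strictly positive on an open set and so produces a nonzero $G$-invariant covector in $V^{*}$ — but these are contrapositive phrasings of the same argument.
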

\begin{proof}
Let $v\in V\setminus \{0\}$ be given. Suppose towards contradiction that $0$ is not in the interior of the convex hull of $\pi(G)v$. Lemma \ref{thm:halfspace} says $\pi(G)v$ must lie in a homogeneous half space, which we call $H$.  In other words, there exists $v_0^* \in V$ such that $v_0^*(\pi(g)v) \geq 0$ for all $g\in G$. As the $\R$-span of $\pi(G)v$ is a nonzero invariant subspace of $V$ and $V$ is irreducible, $\operatorname{span}_{\R}{\pi(G)v} = V$. Consequently, $\pi(G)v$ is not contained solely in the boundary of $H$, and by continuity of $\pi$, $v_0^*(\pi(u)v)> 0$ for all $u$ in some open subset $U$ of $G$.   
Now, define $f \in V^*$ to be the covector $f(w)=\int_G w^*(\pi(g)v)dg$, where $dg$ is the normalized Haar measure on $G$. Since $dg$ is $G$-invariant, so is $f$. Write $f(v_0)$ into the sum of two integrals
\vspace{3 mm}
\begin{align*}
f(v_0)= \int_U v_0^*(\pi(g)v) dg + \int_{G\backslash U} v_0^*(\pi(g)v) dg,
\end{align*}
noting that the first integral is positive and the second integral is non-negative. Therefore $f(v_0) > 0$, meaning $f$ is a nontrivial $G$-invariant vector of $V^*$ which contradicts irreducibility of $V^*$ ($\dim V < \infty$). Thus, $0$ can be expressed as a positive convex combination of $\pi(g_1)v,\pi(g_2)v,\ldots,\pi(g_n)v$. As $\operatorname{span}_{\R}{\pi(G)v} = V$ and $G$ acts transitively on $\pi(G)v$, the vectors $\pi(g_1)v,\pi(g_2)v,\ldots,\pi(g_n)v$ can be chosen so that its convex hull has nonempty interior, augmenting $n$ if necessary.
\end{proof}
%%%%%%%%%%%%%%%%%%%%%%%%%%%%%%%%%%%%%%%
% DIFF GEO DEFS %
%%%%%%%%%%%%%%%%%%%%%%%%%%%%%%%%%%%%%%%
\begin{defn}
A smooth map $f\colon M\to N$ of manifolds is \emph{submersive} at $m\in M$ if the differential $df_m$ is surjective; $f$ is called a submersion if it is submersive at every point $m$.  
\end{defn}
\begin{lemma}
\label{thm:submersive}
Let $G$ be compact and simple, $(\pi,V)$ an irreducible representation of $G$. For fixed $v\in V$, let $f: G \to V$ be the map
$f(x) = \pi(x)v.$ If there are points $x_1,x_2,\ldots,x_n$ such that $f(x_1),\ldots,f(x_n)$ spans $V$, then 
\[df_{x_1}T_{x_1}(G)+df_{x_2}T_{x_2}(G)+\cdots+df_{x_n}T_{x_n}(G) = V.\] 
 
\end{lemma}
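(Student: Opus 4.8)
The plan is to identify each subspace $df_{x_i}T_{x_i}(G)$ explicitly, fix a $G$-invariant inner product on $V$, and then show that no nonzero covector can annihilate the sum of these subspaces — the point being that the spanning hypothesis is exactly what rules out such a covector, once irreducibility of $\pi$ is invoked.

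First I would unwind the differential. Differentiating $f$ along the curve $t\mapsto x\exp(tY)$, $Y\in\g$, gives $df_x(T_xG)=\{\pi(x)\,d\pi(Y)v:Y\in\g\}=\pi(x)W$, where $W:=d\pi(\g)v\subseteq V$ is a fixed subspace independent of $x$. (We may assume $\pi$ nontrivial; since $G$ is connected this also forces $W\neq 0$, as $W=0$ would make $v$ a $G$-fixed vector.) Thus the claim is equivalent to $\sum_{i=1}^{n}\pi(x_i)W=V$.

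Next, fix a $G$-invariant inner product $\langle\cdot,\cdot\rangle$ on $V$ — available by compactness of $G$ — so that every $\pi(g)$ is orthogonal and every $d\pi(Y)$ is skew-adjoint. Set $V'=\sum_i\pi(x_i)W$ and suppose, for contradiction, that $V'\neq V$; choose $0\neq\phi\perp V'$. Then $\langle\phi,\pi(x_i)\,d\pi(Y)v\rangle=0$ for all $i$ and all $Y\in\g$. The key manipulation is to move the infinitesimal action off of $v$ and onto $\pi(x_i)v$: using $\pi(x_i)\,d\pi(Y)\,\pi(x_i)^{-1}=d\pi(\Ad(x_i)Y)$ and surjectivity of $\Ad(x_i)$ on $\g$, the relation becomes $\langle\phi,\,d\pi(Z)(\pi(x_i)v)\rangle=0$ for all $Z\in\g$, and then skew-adjointness rewrites it as $\langle d\pi(Z)\phi,\,\pi(x_i)v\rangle=0$. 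Hence $f(x_i)=\pi(x_i)v\in(d\pi(\g)\phi)^{\perp}$ for every $i$.

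Finally I would invoke irreducibility once more. Since $G$ is connected and $\pi$ is nontrivial, the nonzero vector $\phi$ cannot be $G$-fixed, so $d\pi(\g)\phi\neq 0$ and therefore $(d\pi(\g)\phi)^{\perp}$ is a \emph{proper} subspace of $V$. But the previous step places all of $f(x_1),\dots,f(x_n)$ inside this proper subspace, contradicting the hypothesis that they span $V$. Hence $V'=V$, as claimed. I expect the only real content to be the passage to the invariant inner product and the two uses of irreducibility (to get $W\neq0$ and $d\pi(\g)\phi\neq0$); everything else is routine bookkeeping with the identity $\pi(x)\,d\pi(Y)\,\pi(x)^{-1}=d\pi(\Ad(x)Y)$.
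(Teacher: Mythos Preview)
Your proof is correct but takes a different route from the paper's. The paper computes the differential via \emph{left} translation --- along curves $t\mapsto\exp(tc_iX)x_i$ one obtains $d\pi(X)\sum_i c_i\pi(x_i)v$ --- so the image of the combined differential directly contains $d\pi(\g)\bigl(\operatorname{span}_{\R}\{\pi(x_i)v\}\bigr)=d\pi(\g)V$, and $\g$-irreducibility finishes the argument in one line, with no inner product and no contradiction. You instead compute via \emph{right} translation, obtaining $df_{x_i}T_{x_i}(G)=\pi(x_i)\,d\pi(\g)v$, and then run an orthogonality argument using an invariant inner product and skew-adjointness of $d\pi$. It is worth noting that your intermediate relation $\langle\phi,d\pi(Z)\pi(x_i)v\rangle=0$ (for all $Z,i$) already says $\phi\perp d\pi(\g)\operatorname{span}\{\pi(x_i)v\}=d\pi(\g)V=V$, forcing $\phi=0$ without the skew-adjointness step; in effect your $\Ad$-rewriting has reconverted the right-translation description back into the paper's left-translation form, and the paper simply starts there. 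The paper's approach is shorter and avoids the auxiliary inner product; yours makes the dual viewpoint explicit and cleanly isolates where nontriviality of $\pi$ enters.
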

\begin{proof}
Suppose $x := (x_1,\ldots,x_m)$ is a point in $G^n$ for which $\{\pi(x_i)v\}_{i=1}^n$ spans $V$. Under the tangent space identifications $T_x(V)\cong V$ and \[T_{(x_1,x_2,\ldots,x_n)}(G^n) \cong T_{x_1}(G)\times\cdots \times T_{x_n}(G),\] the statement \[df_{x_1}T_{x_1}(G)+df_{x_2}T_{x_2}(G)+\cdots+df_{x_n}T_{x_n}(G) = V\] is equivalent to showing that the map 
\[F\colon (g_1,g_2,\ldots,g_n)\to \pi(g_1)v+\pi(g_2)v+\cdots+\pi(g_n)v\] is submersive at $x=(x_1,x_2,\ldots,x_n)$. We show this by taking directional derivatives. For fixed $X\in \g$ and $n$-tuple of scalars $c = (c_1,\ldots,c_n)$ , let $\gamma_X :\R\times\R^n \to G^n$ be the smooth function 
\[\gamma_X(t,c) =(\exp(tc_1X)x_1,\ldots,\exp(tc_nX)x_n).\]
As a function of $t$, $\gamma_X(t,c)$ is a right-translated one parameter subgroup through $x$, with 
\[\gamma_X'(0,c) =(d(r_{x_1})X,d(r_{x_2})X,\ldots,d(r_{x_n})X)\in T_{x_1}(G) \times \cdots\times T_{x_n}(G). \]
where $r_{x_i}$ is the right multiplication map by $x_i$. 
Explicit calculation shows that the image of $dF_x$ contains
\begin{align*}
dF_x(d(r_{x_1})X,d(r_{x_2})X,\ldots,d(r_{x_n})X) &= \left.\frac{d}{dt}\right|_{t = 0}\!\!\!\!\!\!F(\gamma_X(t,c))\\
&= \left.\frac{d}{dt}\right|_{t = 0} \sum_{i=1}^n\pi((\exp tc_iX) x_i)v\\
&= \left.\frac{d}{dt}\right|_{t = 0} \sum_{i=1}^n(\exp tc_id\pi(X))\pi(x_i)v\\
&= d\pi(X)\sum_{i=1}^nc_i\pi(x_i)v.
\end{align*}
where the penultimate equality follows from the intertwining property of $\exp$ (equation \ref{equ:intertwines}), and the last equality follows by applying the derivative to each term, factoring $d\pi(X)$ from the sum. As the choice of $X$, $c$ was arbitrary, the image of $dF_{x}$ contains the subspace 
\[d\pi(\g)\left(\operatorname{span}_{\R}\{\pi(x_1)v,\ldots,\pi(x_n)v\}\right).\]
By assumption, $\operatorname{span}_{\R}\{\pi(x_1)v,\ldots,\pi(x_n)v\} = V$, and $V$ remains irreducible as a representation of $\g$, whence $d\pi(\mathfrak \g)V = V$. The lemma is proved.  
\end{proof} 
\begin{cor}
\label{thm:convexsubmersive}
Let $G$ be a compact Lie group, $(\pi,V)$ an irreducible representation of $G$. For fixed $v\in V$, let $f: G \to V$ be the smooth map
$f(x) = \pi(x)v.$ Suppose the convex hull of $f(x_1),\ldots,f(x_n)$ contains $0$ as an interior point. Then 
\[df_{x_1}T_{x_1}(G)+df_{x_2}T_{x_2}(G)+\cdots+df_{x_n}T_{x_n}(G) = V.\]
\end{cor}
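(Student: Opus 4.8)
The plan is to deduce Corollary \ref{thm:convexsubmersive} from Lemma \ref{thm:submersive} by way of an elementary observation in convex geometry: a finite set of vectors whose convex hull contains $0$ as an interior point must span the ambient space linearly.

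First I would record that observation. Let $P$ denote the convex hull of $f(x_1),\dots,f(x_n) = \pi(x_1)v,\dots,\pi(x_n)v$, and suppose $0\in\operatorname{int}P$. Then $P$ has nonempty interior in $V$. Since $P$ is always contained in $W := \operatorname{span}_\R\{\pi(x_1)v,\dots,\pi(x_n)v\}$, if $W$ were a proper subspace of $V$ it would have empty interior in $V$, contradicting $0\in\operatorname{int}P$. Hence $\operatorname{span}_\R\{\pi(x_1)v,\dots,\pi(x_n)v\} = V$. This is the step where the hypothesis that $0$ is an \emph{interior} point of the hull — as opposed to merely lying in the hull — does all the work: it upgrades affine spanning to linear spanning.

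With the spanning hypothesis in hand, Lemma \ref{thm:submersive} applies and yields $df_{x_1}T_{x_1}(G)+\cdots+df_{x_n}T_{x_n}(G) = V$, which is the assertion of the corollary. The only discrepancy to address is that Lemma \ref{thm:submersive} was stated for $G$ compact and \emph{simple}, whereas here $G$ is merely a compact Lie group. However, simplicity of $G$ plays no role in the proof of Lemma \ref{thm:submersive}: the only structural input used is that $\pi$ is a nontrivial irreducible representation, which forces the subrepresentation $d\pi(\g)V$ to equal $V$. Nontriviality is automatic under the corollary's hypothesis, since a trivial $\pi$ would make every $f(x_i)$ equal to $v$, and the convex hull of the single point $\{v\}$ (or of the empty set, if $V=0$) cannot contain $0$ as an interior point.

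Accordingly, I do not expect a genuine obstacle here — the corollary is essentially a repackaging of Lemma \ref{thm:submersive} in the convex-hull formulation needed later, with Theorem \ref{thm:zeroinconvexhull} supplying the points $x_1,\dots,x_n$. The one line that rewards a moment's attention is the convex-geometry reduction above; everything else is a direct citation.
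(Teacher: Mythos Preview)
Your proof is correct and follows essentially the same route as the paper: observe that a convex hull with nonempty interior cannot lie in a proper subspace, deduce that $f(x_1),\ldots,f(x_n)$ span $V$, and invoke Lemma~\ref{thm:submersive}. Your additional remarks on the simplicity hypothesis and nontriviality of $\pi$ are sound but go beyond what the paper records; the paper's own proof is the two-line version of your argument.
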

\begin{proof}
If $f(x_1),f(x_2),\ldots,f(x_n)$ spans a proper subspace of $V$, then the convex hull of \[f(x_1),f(x_2),\ldots,f(x_n)\] is a subset of $V$ with empty interior. So $f(x_1),f(x_2),\ldots,f(x_n)$ spans $V$, and Lemma \ref{thm:submersive} applies.
\end{proof} 
Next, we record the version of the Implicit Function Theorem needed for this paper.
\begin{thm}[Local Submersion Theorem]
\label{thm:LST}
Let $M,N$ be smooth manifolds, If $f\colon M\to N$ is a submersion at a point $m\in M$, and $f(m) = n$, then there exist local coordinates around $m$ and $n$ such that $f$ is locally equivalent to a projection map $\R^{k+l}\to\R^l$.
\end{thm}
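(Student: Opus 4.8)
The plan is to reduce to the classical Inverse Function Theorem in Euclidean space. First I would pass to charts: pick a chart around $m$ identifying a neighborhood of $m$ with an open set $U\subseteq\R^{k+l}$ carrying $m$ to $0$, and a chart around $n$ identifying a neighborhood of $n$ with an open subset of $\R^l$ carrying $n$ to $0$, where $k+l=\dim M$ and $l=\dim N$ (surjectivity of $df_m$ forces $k\ge 0$). In these charts we are reduced to studying a smooth map $f\colon U\to\R^l$ with $f(0)=0$ whose differential $df_0\colon\R^{k+l}\to\R^l$ is surjective, and the goal becomes: after a further change of the chart around $m$, the map $f$ is the projection $(u,v)\mapsto v$.

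Next I would use surjectivity of $df_0$ to split the source. Since $df_0$ has rank $l$, some $l$ of the standard basis vectors of $\R^{k+l}$ are carried by $df_0$ to a basis of $\R^l$; after a linear relabelling of coordinates (an admissible change of chart around $m$) I may take these to be the last $l$ coordinates, so that writing $\R^{k+l}=\R^k_{\,x}\times\R^l_{\,y}$ the partial differential $\partial f/\partial y\,|_0\colon\R^l\to\R^l$ is invertible. I then introduce the auxiliary map
\[
\Phi\colon U\to\R^k\times\R^l,\qquad \Phi(x,y)=\bigl(x,\,f(x,y)\bigr).
\]
With respect to the splitting, $d\Phi_0$ has block form $\begin{pmatrix} I_k & 0 \\ \ast & \partial f/\partial y\,|_0 \end{pmatrix}$, which is invertible because the bottom-right block is. By the Inverse Function Theorem, $\Phi$ restricts to a diffeomorphism of a smaller neighborhood $U'\ni 0$ onto an open set $W\subseteq\R^k\times\R^l$; I take $\Phi|_{U'}$ as the new chart around $m$.

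Finally I would read off the conclusion. If $(u,v)=\Phi(x,y)$ then by construction $v=f(x,y)$, hence $f\circ\Phi^{-1}(u,v)=v$; that is, in the chart $\Phi$ around $m$ and the standard chart around $n$, the map $f$ is precisely the projection $\R^{k+l}\to\R^l$, $(u,v)\mapsto v$. I do not expect a genuine obstacle: the whole content is encapsulated in the Inverse Function Theorem, and the only care required is bookkeeping — checking that $d\Phi_0$ is invertible, shrinking domains so all maps are defined, and noting that composition with chart transition maps preserves smoothness so the composite really is a local coordinate representation of the original $f$.
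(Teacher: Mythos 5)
Your proof is correct. The paper itself does not prove Theorem \ref{thm:LST} --- it simply records it as a standard fact (``the version of the Implicit Function Theorem needed for this paper''), so there is no paper argument to compare against. Your argument is the classical textbook proof: choose a coordinate splitting of the source so that the partial differential in the ``fiber'' directions is invertible, form the auxiliary map $\Phi(x,y)=(x,f(x,y))$, verify $d\Phi_0$ is invertible via the block-triangular structure, invoke the Inverse Function Theorem to make $\Phi$ a chart, and observe that in this chart $f$ becomes the projection onto the second factor. All the bookkeeping you flag (shrinking domains, linear relabelling as an admissible chart change, the rank argument selecting $l$ coordinates) is handled correctly, and no step is missing.
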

\begin{cor}
\label{thm:submersiveinnhd}
If $f\colon M\to N$ is a submersion at $m$ then it is a submersion in a neighborhood of $m$.
\end{cor}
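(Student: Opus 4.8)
The plan is to derive this immediately from the Local Submersion Theorem, Theorem~\ref{thm:LST}, together with the chain rule. Applying Theorem~\ref{thm:LST} at $m$ produces an open neighborhood $U$ of $m$, an open neighborhood $W$ of $n = f(m)$ with $f(U)\subseteq W$, and diffeomorphisms $\phi\colon U\to\phi(U)\subseteq\R^{k+l}$ and $\psi\colon W\to\psi(W)\subseteq\R^l$ onto open sets, such that $\psi\circ f\circ\phi^{-1}$ is the restriction to $\phi(U)$ of the standard projection $p\colon\R^{k+l}\to\R^l$. Two elementary observations then finish the argument. First, $p$ is submersive at \emph{every} point of its domain: its differential at any point is $p$ itself, which is surjective. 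Second, submersiveness is preserved under pre- and post-composition with diffeomorphisms, because the differential of a diffeomorphism is a linear isomorphism at each point, and composing a surjective linear map with isomorphisms on either side yields a surjective map.

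Assembling these: for an arbitrary point $x\in U$, writing $f = \psi^{-1}\circ p\circ\phi$ on $U$ and applying the chain rule gives $df_x = d(\psi^{-1})_{p(\phi(x))}\circ dp_{\phi(x)}\circ d\phi_x$, a composite of an isomorphism, a surjection, and an isomorphism, hence surjective. Thus $f$ is a submersion at every point of the neighborhood $U$ of $m$, which is the assertion. Alternatively, in arbitrary local coordinates near $m$ the map $f$ is represented by a smooth map of open subsets of Euclidean space whose Jacobian has rank $\dim N$ at $m$; some $(\dim N)\times(\dim N)$ minor of the Jacobian is then nonzero at $m$, and by continuity of its entries that minor remains nonzero, so $\rk df = \dim N$, i.e.\ $df$ is surjective, on a neighborhood of $m$.

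There is no substantial obstacle here. The only point meriting attention is that Theorem~\ref{thm:LST} delivers the projection normal form on a full open neighborhood of $m$ rather than merely at the single point $m$; it is precisely this that allows surjectivity of the differential to propagate from $m$ to all nearby points.
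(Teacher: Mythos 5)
Your argument is correct and follows precisely the route the paper intends: the paper states this as an immediate corollary of Theorem~\ref{thm:LST} and omits the proof, and your deduction via the projection normal form (or, equivalently, the open-rank condition on the Jacobian in your alternative) is exactly the standard reasoning being invoked.
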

\begin{prop}
\label{thm:Rn}
Let $n \geq 1$. For positive reals $a_1,\ldots,a_n$, let $A$ be the ray in $\R^n$ emanating from $0$ and passing through the point $(a_1,\ldots,a_n)$. There exists an infinite sequence of points $x_1,x_2,\ldots$ in $\Z^n$, such that $x_1$ is a coordinate vector, $x_{i+1}-x_i$ is a coordinate vector, and $x_i$ is within distance $\sqrt{2n}$ of $A$, for all $i$.     
\end{prop}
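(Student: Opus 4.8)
The plan is to take for $x_1,x_2,\dots$ the cutting sequence of the ray $A$ across the integer grid (with simultaneous crossings broken in an arbitrary fixed order). Write $a=(a_1,\dots,a_n)$, and for $t\ge 0$ let $q(t)\in\Z^n$ have coordinates $q(t)^{(j)}=\lfloor ta_j\rfloor$; then $|q(t)^{(j)}-ta_j|<1$ for all $t$, each $t\mapsto q(t)^{(j)}$ is nondecreasing and right-continuous, and it increases by exactly $1$ at each time $m/a_j$, $m\in\N$. The set of all such \emph{crossing times} is a discrete subset of $(0,\infty)$; list its distinct elements as $0<t_1<t_2<\cdots$ (so $t_k\to\infty$), set $q(t_0):=0$, and for each $k$ let $S(t_k)=\{j:\ t_ka_j\in\N\}$ be the (possibly several) coordinates that increment at $t_k$, so that $q(t_k)=q(t_{k-1})+\sum_{j\in S(t_k)}e_j$. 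Define $x_1,x_2,\dots$ to be the concatenation, over $k=1,2,\dots$, of the blocks
\[q(t_{k-1})+e_{j_1},\ q(t_{k-1})+e_{j_1}+e_{j_2},\ \dots,\ q(t_{k-1})+e_{j_1}+\cdots+e_{j_{|S(t_k)|}}=q(t_k),\]
where $j_1,\dots,j_{|S(t_k)|}$ is a fixed enumeration of $S(t_k)$. Then $x_1=e_j$ for some $j$ with $a_j=\max_\ell a_\ell$ (the first crossing is at $t_1=1/\max_\ell a_\ell$), every $x_{i+1}-x_i$ is a coordinate vector, and the sequence is infinite since crossings occur at arbitrarily large $t$.

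To estimate $\operatorname{dist}(x_i,A)$, fix $i$ and let $t_k$ be the crossing time whose block contains $x_i$; I claim $|x_i^{(j)}-t_ka_j|\le 1$ for every $j$. The point $x_i$ differs from $q(t_{k-1})$ only in coordinates $j\in S(t_k)$, each such coordinate taking either its old value or that value plus $1$. If $j\notin S(t_k)$ then $x_i^{(j)}=\lfloor t_ka_j\rfloor$ with $t_ka_j\notin\N$, so $|x_i^{(j)}-t_ka_j|<1$; if $j\in S(t_k)$ then $t_ka_j$ is an integer $M_j$, $q(t_{k-1})^{(j)}=M_j-1$, hence $x_i^{(j)}\in\{M_j-1,M_j\}$ and $|x_i^{(j)}-t_ka_j|\le 1$. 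Therefore $\|x_i-t_ka\|^2\le n$, and since $t_ka\in A$ we conclude $\operatorname{dist}(x_i,A)\le\sqrt n\le\sqrt{2n}$. (Replacing the integer grid by the half-integer grid, $q(t)^{(j)}=\lfloor ta_j+\tfrac12\rfloor$, would sharpen the constant to $\sqrt n/2$, but $\sqrt{2n}$ is more than enough.)

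The only point needing care is the presence of simultaneous crossings $|S(t_k)|>1$, forced by rational dependencies among the $a_j$; these cannot be dodged by perturbing $a$, since a perturbed ray eventually drifts an unbounded distance from $A$. They are harmless: the estimate above bounds \emph{every} intermediate point of the block at $t_k$ --- not merely the endpoints $q(t_{k-1})$ and $q(t_k)$ --- coordinatewise within $1$ of $t_ka$. I therefore expect no real obstacle; the substance of the proposition is simply that the crude floor-rounding path along $A$ is automatically a monotone, unit-step lattice path that stays within $\sqrt n$ of the ray.
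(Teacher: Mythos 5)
Your proof is correct and follows essentially the same route as the paper's: both track the lattice path $t\mapsto(\lfloor ta_1\rfloor,\ldots,\lfloor ta_n\rfloor)$, enumerate the distinct lattice points it visits (you phrase this via crossing times, the paper via the image ordered lexicographically), interpolate any multi-coordinate jump into unit coordinate steps, and then bound $\lvert x_i-ta\rvert$ coordinatewise. Your coordinatewise estimate is in fact a bit tighter — you get $\sqrt n$ rather than $\sqrt{2n}$ — and you are more explicit about resolving simultaneous crossings, but these are refinements rather than a different argument.
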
 
\begin{proof}
As $a_1,\ldots,a_n$ are positive, the function $f\colon \R^+\to Z^n$ given by \[f(t) = (\left\lfloor ta_1 \right\rfloor,\ldots ,\left\lfloor ta_n \right\rfloor)\] is nondecreasing with respect to the lexicographic ordering $\preceq$. Therefore, the image of $f$ can be listed as a sequence $y_0, y_1,y_2,\ldots$ of points in $\Z^n$ with the properties $y_0 = 0$, $y_{i}\prec y_{i+1}$ for all $i\in\N$. Hence, the difference $y_{i+1}-y_i$ is $\sum_{k=1}^n\delta_{k}e_k,$ where $\delta_{k} = 0$ or $1$ and $\delta_k = 1$ for at least one $k$. We may therefore construct a sequence of lattice points $x_1,x_2,\ldots$, containing $y_1,y_2,\ldots $ as a subsequence, such that $x_1$ is a coordinate vector, and $x_{i+1}-x_i$ a coordinate vector. 
\noindent The definition of the floor function implies, for all nonnegative $t$,
\[\sup_i \left\vert\left\lfloor ta_i \right\rfloor - ta_i\right\vert^2 \leq 1.\]
For those $t$ such that $f(t) = y_k$, we have, by triangle inequality,
\[\left|y_k  - t(a_1,\ldots,a_n)\right|^2 \leq \sup_i n\left\vert\left\lfloor ta_i \right\rfloor - ta_i\right\vert^2 n \leq n\]
and 
\[\left| y_{k+1} - t(a_1,\ldots,a_n)\right|^2 \leq 2n.\]
By construction, for any $x_l$ there exists $y_k$ so that $y_k\preceq x_l\prec y_{k+1}$; it follows from the last estimate that 
\[\inf_{t\in \R^+}\left|x_{l} - t(a_1,\ldots,a_n)\right|^2 \leq 2n\]
The sequence $x_1,x_2\ldots$ satisfies the lemma.
\end{proof}
\begin{lemma}
\label{thm:addvect}
Let $S=\{v_1,\ldots v_n\}$ be a set of vectors in $V$ and $a_1,\ldots,a_n$ positive real numbers such that $\sum_{i=1}^n a_iv_i = 0$. Then there exists $R>0$, and a sequence $w_1,w_2,\ldots$ of vectors from $S$ such that all partial sums $\sum_{i=1}^n w_i$ are bounded by $R$.
\end{lemma}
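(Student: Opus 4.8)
The plan is to reduce the statement to Proposition~\ref{thm:Rn} by transporting everything through the linear map that sends the standard basis of $\R^n$ to the given vectors. Fix any norm $\|\cdot\|$ on $V$ and let $T\colon\R^n\to V$ be the linear map determined by $T(e_i)=v_i$, where $e_1,\ldots,e_n$ is the standard basis of $\R^n$. The hypothesis $\sum_{i=1}^n a_iv_i=0$ says exactly that $T(a_1,\ldots,a_n)=0$, and since $T$ is linear it annihilates the entire ray $A=\{t(a_1,\ldots,a_n):t\ge 0\}$ through $(a_1,\ldots,a_n)$.

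Next, apply Proposition~\ref{thm:Rn} to the positive reals $a_1,\ldots,a_n$ to obtain a sequence $x_1,x_2,\ldots$ in $\Z^n$ such that $x_1$ is a coordinate vector, each difference $x_{i+1}-x_i$ is a coordinate vector, and each $x_k$ lies within distance $\sqrt{2n}$ of $A$. Set $x_0=0$; then for every $k\ge 1$ we may write $x_k-x_{k-1}=e_{j_k}$ for a single index $j_k$, and we define $w_k=v_{j_k}$, which genuinely belongs to $S$. Telescoping and using linearity of $T$, the $k$-th partial sum is
\[
\sum_{i=1}^k w_i \;=\; T\!\left(\sum_{i=1}^k (x_i-x_{i-1})\right) \;=\; T(x_k).
\]

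It remains to bound $\|T(x_k)\|$ uniformly in $k$. Since $A$ is a closed half-line, the distance from $x_k$ to $A$ is attained, so there is $t_k\ge 0$ with $\lvert x_k-t_k(a_1,\ldots,a_n)\rvert\le\sqrt{2n}$; put $r_k=x_k-t_k(a_1,\ldots,a_n)$. Because $T$ kills the ray, $T(x_k)=t_k\,T(a_1,\ldots,a_n)+T(r_k)=T(r_k)$, and hence
\[
\left\|\sum_{i=1}^k w_i\right\| \;=\; \|T(r_k)\| \;\le\; \|T\|_{\mathrm{op}}\,\sqrt{2n} \;=:\; R,
\]
where $\|T\|_{\mathrm{op}}$ is the operator norm of $T$ with respect to the Euclidean norm on $\R^n$ and $\|\cdot\|$ on $V$. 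This $R$ depends only on $S$ and the coefficients $a_i$, and the sequence $w_1,w_2,\ldots$ has all partial sums bounded by $R$, as required.

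I do not expect a real obstacle here: Proposition~\ref{thm:Rn} already carries the entire combinatorial burden. The only points demanding a moment of care are that each $w_k$ is an \emph{honest} element of $S$ — which works precisely because consecutive lattice differences in Proposition~\ref{thm:Rn} are single coordinate vectors rather than sums of several — and that the distance bound supplied by that proposition is realized by an actual parameter value $t_k$, so that the decomposition $x_k=t_k(a_1,\ldots,a_n)+r_k$ is available to feed into the linear map $T$.
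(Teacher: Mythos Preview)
Your proof is correct and follows essentially the same approach as the paper: both invoke Proposition~\ref{thm:Rn}, pass through the linear map $T(e_i)=v_i$, and observe that the $k$-th partial sum equals $T(x_k)$, which is bounded because $T$ annihilates the ray $A$. The only cosmetic difference is that the paper bounds $T(x_k)$ coordinatewise to obtain the explicit constant $R=n\sqrt{2n}\sup_j|v_j|$, whereas you package the same estimate into $\|T\|_{\mathrm{op}}\sqrt{2n}$.
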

\begin{proof}
Let $A$ be the ray in $\R^n$ anchored at $0$ and passing through $a = (a_1,\ldots,a_n)$. By Proposition \ref{thm:Rn}, there exists a sequence of points $x_1,x_2,\ldots$ from $\Z^n$, such that $x_i$ is within distance $\sqrt{2n}$ of $A$, and consecutive terms differ by a coordinate vector. Let $ta$ be a point on the ray whose distance to $x_k$ is at most $\sqrt{2n}$; if we write $x_k$ as $(x_{k1},x_{k2}\ldots,x_{kn})$, then by comparing coordinates, 
\[\sup_i\left|x_{ki}-t a_i\right| \leq \left|x_k-ta\right|< \sqrt{2n}.\]
By assumption, $t a_1v_1+\cdots +t a_nv_n = 0$, yielding
\begin{align*}
\left|x_{k1}v_1+\ldots+x_{kn}v_n\right| &= \left|x_{k1}v_1+\cdots+x_{kn}v_n- (t a_1v_1+\cdots +t a_nv_n)\right|\\
                                          &\leq |x_{k1}-ta_1||v_1|+\cdots +|x_{kn}-ta_n||v_n|\\ 
                                          &\leq n\sqrt{2n}\sup_j|v_j|.
\end{align*}
Let $R= n\sqrt{2n}\sup_j|v_j|$ and define $f:\Z^n \to V$ to be the linear map
\[f(b_1,\ldots,b_n) = b_1v_1+b_2v_2+\ldots + b_nv_n.\] 
Notice that $f$ maps the set of coordinate vectors of $\R^n$ onto $S$. By the construction in Proposition \ref{thm:Rn}, $f(x_1)\in S$, and since $x_{i+1}-x_i$ is a coordinate vector, $f(x_{i+1}-x_i)\in S$. Explicitly, the sequence
\[f(x_1),f(x_2-x_1),f(x_3-x_2),\ldots\]
has $k$-th partial sum $f(x_k)$, which was just shown to satisfy $|f(x_k)|\leq R$.
\end{proof}
\begin{thm}
\label{thm:linearized}
Let $V$ be a finite dimensional real vector space,
$M$ a smooth manifold, $f\colon M\to V$ a smooth function, $x_1,\ldots,x_m$
points of $M$ such that $0$ lies in the interior of the convex hull of $f(x_1),\ldots,f(x_m)$,
and 
\begin{equation}
\label{equ:subcond}
V = df_{x_1}T_{x_1}(M) + \cdots + df_{x_m}T_{x_m}(M).
\end{equation}
Then, for all $n$ sufficiently large, there exist $z_1,\ldots,z_n\in M$ such that
\begin{equation}
\label{equ:vanwant}
\sum_{i=1}^n f(z_i) = 0,
\end{equation} and 
\begin{equation}
\label{equ:subwant}
V = df_{z_1}T_{z_1}(M) + \cdots + df_{z_n}T_{z_n}(M).
\end{equation}
\end{thm}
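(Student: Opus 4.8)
The plan is to realize the $z_i$ as small perturbations of a long word of copies of $x_1,\dots,x_m$ whose $f$-values nearly cancel, and then to use the submersion hypothesis \eqref{equ:subcond} --- amplified by the number of repetitions --- to correct the small residual to $0$ exactly. For the combinatorial input, observe that $0$ lying in the \emph{interior} of the convex hull of $f(x_1),\dots,f(x_m)$ forces, by the standard description of the relative interior of a polytope (\cite{Rockafellar:1970}), an identity $\sum_i a_i f(x_i)=0$ with every $a_i>0$. Feeding this into Lemma \ref{thm:addvect} with $v_i=f(x_i)$ produces a radius $R>0$ and an infinite word $w_1,w_2,\dots$ in the letters $f(x_1),\dots,f(x_m)$ --- write $w_l=f(x_{j(l)})$ --- all of whose partial sums have norm at most $R$. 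From the construction through Proposition \ref{thm:Rn} (a lattice path shadowing the ray $\R_{\ge 0}(a_1,\dots,a_m)$, whose coordinates are exactly the letter-multiplicities and all tend to $+\infty$) I would extract two facts: (i) for $n$ large, every index $1,\dots,m$ occurs among $j(1),\dots,j(n)$; and (ii) the quantity $J(n):=\min_k\#\{\,l\le n:\ j(l)=k\,\}$ tends to $\infty$.

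Fix $n$ beyond the threshold in (i), put $\zeta_n=(x_{j(1)},\dots,x_{j(n)})\in M^n$, and let $G_n\colon M^n\to V$ be $G_n(g_1,\dots,g_n)=\sum_{l=1}^n f(g_l)$. Then $|G_n(\zeta_n)|=\bigl|\sum_{l=1}^n w_l\bigr|\le R$, a bound independent of $n$; and $dG_n$ is surjective at $\zeta_n$, since its image is $\sum_{k=1}^m df_{x_k}T_{x_k}(M)$, which equals $V$ by \eqref{equ:subcond} because each $x_k$ appears. Thus $\zeta_n$ already satisfies \eqref{equ:subwant}; what remains is to nudge it so as to annihilate the bounded defect $G_n(\zeta_n)$ while keeping the differential onto.

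To achieve this, partition the $n$ slots of $\zeta_n$ into $J(n)$ disjoint \emph{blocks}, each block comprising one slot for each of $x_1,\dots,x_m$, and freeze the slots outside the blocks at their current values. Varying the entries of a single block (the rest frozen) turns $G_n$, up to an additive constant, into the map $F\colon M^m\to V$, $F(h_1,\dots,h_m)=\sum_k f(h_k)$, which is submersive at $(x_1,\dots,x_m)$ by \eqref{equ:subcond}; shrinking via Corollary \ref{thm:submersiveinnhd} and invoking the Local Submersion Theorem \ref{thm:LST}, $F$ carries a neighborhood $W$ of $(x_1,\dots,x_m)$ onto a set containing a fixed ball $B(p,\rho)$, where $p=\sum_k f(x_k)$ and $\rho>0$ depends only on $f$ and $x_1,\dots,x_m$. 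Letting all $J(n)$ blocks run over $W$ independently, $G_n$ attains every value in the Minkowski sum of $J(n)$ copies of $B(p,\rho)$ together with the frozen contribution --- that is, every value in the ball of radius $J(n)\rho$ centered precisely at $G_n(\zeta_n)$, the centers summing to $\sum_k\#\{l\le n:\ j(l)=k\}\,f(x_k)=\sum_l w_l$. Since $J(n)\rho\to\infty$ while $|G_n(\zeta_n)|\le R$, for all $n$ sufficiently large this ball contains $0$; the point $(z_1,\dots,z_n)\in W^{J(n)}\times\{\text{frozen slots}\}$ realizing $G_n=0$ then satisfies \eqref{equ:vanwant}, and \eqref{equ:subwant} persists because the block sitting inside $W$ already spans $V$ under $dG_n$, $F$ being submersive throughout $W$.

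The one genuinely non-formal point is the amplification. A lone $m$-tuple can be perturbed only enough to cancel a residual of size $\rho$, a quantity fixed before $n$ is chosen, whereas Lemma \ref{thm:addvect} bounds the residual merely by a fixed $R$ that may well exceed $\rho$. The escape --- that the $n$-letter prefix repeats each $x_k$ a number of times growing with $n$, so that the effective correction radius $J(n)\rho$ becomes unbounded --- is precisely what Proposition \ref{thm:Rn} and Lemma \ref{thm:addvect} were set up to deliver: configurations of every length whose letter-multiplicities grow slowly and in fixed proportions. The remaining ingredients --- the polytope-interior description, the Local Submersion Theorem, Corollary \ref{thm:submersiveinnhd}, and the Minkowski sum of balls --- are routine.
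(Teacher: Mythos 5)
Your proposal is correct, but it takes a genuinely different route from the paper's. The paper first manufactures an \emph{exact} zero of $F_n^{\mathbbm{1}_n}$ for one specific $n$: it applies the Implicit Function Theorem to the map $F_m(t,y)=\sum t_i f(y_i)$ near the positive convex coefficients $a$, perturbs $a$ to a nearby rational tuple $p/q$ that still yields a zero, and then diagonally embeds to read off a specific integer $n=\sum_i p_i\prod_{j\ne i}q_j$ and point $z$ with $F_n^{\mathbbm{1}_n}(z)=0$ and submersive there. Only afterwards does it invoke Lemma~\ref{thm:addvect}: the submersion near $z$ produces a ball $B_r(0)$, and appending a bounded-partial-sum tail $s_k$ shifts $B_{lr}(0)$ by at most $R$, so $lr\ge R$ clears all $n'\ge ln$. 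You dispense with the IFT and the rational approximation entirely: you set up, for every $n$, the configuration $\zeta_n$ built from the prefix of the Lemma~\ref{thm:addvect} sequence, observe that the residual $G_n(\zeta_n)$ is uniformly bounded by $R$, and then amplify the submersion directly by block Minkowski sums, relying on the growth of the minimal letter-multiplicity $J(n)$ to make the correction radius $J(n)\rho$ eventually exceed $R$. The block argument lands the ball exactly at center $G_n(\zeta_n)$, which you verify by a multiplicity count, and any block sitting in the submersive neighborhood $W$ already carries the spanning needed for \eqref{equ:subwant}. Both proofs ultimately turn on the same two ingredients --- the bounded partial sums of Lemma~\ref{thm:addvect} and the Minkowski sum of submersion balls --- but yours inverts the order: instead of nailing an exact solution and then shifting it, you produce an approximate solution of bounded error for every $n$ and grow the correction radius. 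This eliminates the IFT/rational-approximation machinery and gives a single uniform argument; the one point worth making explicit (which you do gesture at) is that the lattice path of Proposition~\ref{thm:Rn} stays within bounded distance of a ray with all-positive direction, so \emph{each} coordinate of $x_n$, i.e.\ each letter-multiplicity, must tend to infinity, which is exactly why $J(n)\to\infty$.
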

\begin{proof}
Since $0$ lies in the interior of the convex hull of $f(x_1),\ldots,f(x_m)$, for $\epsilon >0$ sufficiently small, we can find convex coefficients $a_1,a_2,\ldots,a_m$ such that 
\[a_1f(x_1) + a_2f(x_2) + \cdots + a_mf(x_m) = -\epsilon(f(x_1)+\cdots+f(x_m)).\]
Solving and renormalizing, $f(x_1),\ldots,f(x_m)$ satisfy 
\begin{equation}
\label{equ:convanish}
a_1f(x_1) + a_2f(x_2) + \cdots + a_mf(x_m) = 0
\end{equation} 
and each convex coefficient $a_1,a_2,\ldots,a_m$ in \eqref{equ:convanish} is positive. Keeping $m$ fixed but allowing $k\in\N$ to vary, write $a \in\R^m$ as $a=(a_1,\ldots,a_m)$, $x\in M^m$ as $x=(x_1,\ldots,x_m)$, $t\in \R^k$ as $t = (t_1,\ldots,t_k)$, $y\in M^k$ as $y = (y_1,\ldots,y_k)$, and let $\mathbbm{1}_k=(1,\ldots,1)$  ($k$ times). Define the family of smooth maps $F_1,F_2,\ldots$ to be
\[F_k(t,y) = \sum_{i=1}^kt_if(y_i),\]
and let $F_k^t\colon M^k\to V$ be the restriction of $F_k$ to the slice $\{t\}\times M^k$. If every component of $t$ is nonzero, the image of the differential of $F_k^t$ at $y$ is the subspace 
\[df_{y_1}T_{y_1}(M) + \cdots + df_{y_k}T_{y_k}(M).\]
So the theorem is proved once we produce an $N\in\N$ such that, for all $n\geq N$ some point $z=(z_1,\ldots,z_n)$ in $M^n$ satisfies $F_n^{\mathbbm{1}_n}(z)=0$ and $F_n^{\mathbbm{1}_n}$ is submersive at $z$, because the coordinates of such a $z$ would satisfy properties \eqref{equ:vanwant} and \eqref{equ:subwant} simultaneously. At this point, we know $F_m^a(x)=F_m(a,x)=0$ and $F_m^a$ is submersive at $x$. The Implicit Function Theorem grants a $\delta>0$ and a smooth function \[g\colon a+(-\delta,\delta)^m \to M^m\] satisfying $g(a) = x$, $F_m^t(g(t)) = F_m(t,g(t)) = 0$ for $t$ satisfying $\sup_i|t_i-a_i|<\delta$. 
\noindent By Corollary \ref{thm:submersiveinnhd} and our spanning assumption \eqref{equ:subcond}, $F_m^t$ is a submersion at all points in a neighborhood $U$ of $x$, so long as each component of $t$ is nonzero.  Choosing $\delta$ small enough so that each $t\in  a+(-\delta,\delta)^m$ has positive coordinates and $g(a+(-\delta,\delta)^m)\subset U$, we may assume that $F_m^t$ is a submersion at $y=g(t)$ for all $t\in  a+(-\delta,\delta)^m$. In particular, there exists an $m$-tuple of positive rationals \[\frac{p}{q}:=\left(\frac{p_1}{q_1},\frac{p_2}{q_2},\ldots,\frac{p_m}{q_m}\right)\] such that $p_i/q_i$ falls within $\delta$ of $a_i$. It follows that 
$F_m^{p/q}$ vanishes at the point 
\[c:= (c_1,\ldots,c_m) := g\left( \frac{p_1}{q_1},\frac{p_2}{q_2},\ldots,\frac{p_m}{q_m}\right)\in X^m\] and $F_m^{p/q}$ is submersive at $c$. 
Denote by $\Delta^k$ the diagonal submanifold of $M^k$; if we let 
\begin{equation}
\label{existn}
n= p_1q_2\cdots q_m + q_1p_2\cdots q_m + \cdots + q_1q_2\cdots p_m,
\end{equation}
then notice that the function $q_1q_2\cdots q_mF_m^{p/q}$ is simply the restriction of $F_n^{\mathbbm{1}_n}$ to the product of diagonal manifolds \[\Delta^{p_1q_2\cdots q_m}\times\Delta^{q_1p_2\cdots q_m}\times\cdots\times\Delta^{q_1q_2\cdots p_m}\subset M^{n}.\] Therefore, the image of the differential of $q_1q_2\cdots q_mF_m^{p/q}$ at $c$ is a subspace of the image of the differential of $F_n^{\mathbbm{1}_n}$ at  
\begin{equation}
\label{npoints}
z=(z_1,\ldots,z_n):=(\underbrace{c_1,\ldots,c_1}_{p_1q_2\cdots q_m},\underbrace{c_2,\ldots,c_2}_{q_1p_2\cdots q_m},\ldots,\underbrace{c_m,\ldots,c_m}_{q_1q_2\cdots p_m}).
\end{equation}
Since $q_1q_2\cdots q_mF_m^{p/q}(c) = 0$ and $q_1q_2\cdots q_mF_m^{p/q}$ is submersive at $c$, we've shown the existence of an $n$, given by equation \eqref{existn} and $n$-tuple $z=(z_1,\ldots,z_n)$, defined in \eqref{npoints}, such that $F_n^{\mathbbm{1}_n}(z) = 0$ and $F_n^{\mathbbm{1}_{n}}$ is submersive at $z$. The theorem would follow once \eqref{equ:vanwant} and \eqref{equ:subwant} are satisfied for all integers beyond a sufficiently large multiple of $n$. Consider $F_n^{\mathbbm{1}_n}$ near $z$. By Theorem \ref{thm:submersiveinnhd}, there exists an open set $Z\subset M^n$ containing $z$ and an open ball $B_r(0)\subset V$ such that $F_n^{\mathbbm{1}_n}$ is a submersion on $Z$ and $F_n^{\mathbbm{1}_n}( Z)=B_r(0)$. Notice further, for all $k,l\in\N$, the function $F_{ln+k}^{\mathbbm{1}_{ln+k}}$ remains a submersion at all points in $Z^l\times X^k$. By linearity,  
\begin{equation}
\label{equ:ontoball}
F_{ln+k}^{\mathbbm{1}_{ln+k}}(Z^l\times \{y\})=B_{lr}(0)+\sum_{i=1}^kf(y_i).
\end{equation}   Now, recalling that $0$ lies in the interior of the convex hull of $f(x_1),f(x_2),\ldots,f(x_m)$, by Lemma \ref{thm:addvect}, there is a bound $R>0$ and sequence of vectors $f(x_{i_1}),f(x_{i_2})\ldots$, where $x_{i_j}\in\{x_1,x_2,\ldots,x_m\}$, such that the sequence of vectors \[s_k:=\sum_{j=1}^k f(x_{i_j})=F_k(\mathbbm{1}_k,x_{i_1},\ldots,x_{i_k})\] satisfy $s_k\in B_R(0)$ for all $k\in\N$. Consequently, if $l\in\N$  is chosen to satisfy $lr\geq R$, then the translated ball $B_{lr}(0)+s_k\subset V$ contains $0$, for all $k\in\N$. By \eqref{equ:ontoball}, there exists a point $z^*\in Z^l\times \{x_1,\ldots,x_m\}^k$ such that $F_{ln+k}^{\mathbbm{1}_{ln+k}}(z^*) = 0$ and $F_{ln+k}^{\mathbbm{1}_{ln+k}}$ is submersive at $z^*$. Therefore, if $n'\geq ln$, there exists $z^*\in M^{n'}$ whose coordinates satisfy \eqref{equ:vanwant} and \eqref{equ:subwant}. The Theorem is proved.

\end{proof}
Let $G$ be simple. For fixed $X\in\g$, let $f(x) = \Ad(x)X$. Viewing $\g$ as a real vector space on which $\Ad$ acts irreducibly (\cite{Fulton-Harris:1991}, page 434),  Lemmas \ref{thm:zeroinconvexhull}, \ref{thm:convexsubmersive} specialize to yield $0$ as expressed by a sufficiently large {\emph{convex}} combination of vectors $\Ad(x_1)X,\ldots,Ad(x_n)X$, and the function \[(g_1,\ldots,g_n)\to \Ad (g_1)X+\cdots + \Ad (g_n)X\] is submersive at $(x_1,\ldots,x_n)$. Theorem \ref{thm:linearized} shows that the convex coefficients $a_1,\ldots,a_n$ can be replaced by sufficiently large positve integer coefficients $b_1\ldots,b_n$, retaining the vanishing and submersivity properties of Corollary \ref{thm:convexsubmersive}.   In other words, for sufficiently large $n$, the sum of $n$ copies of $\Ad(G)X$ contains a neighborhood of $0$. Notice that $n$ may depend on $X$. We lift this dependency, at the group level, in the next section. %%%%%%%%%%%%%%%%%%%%%%%%%%%%%%%%%%%%%%%%%%%%%%%%%%%%%%%%%%%%%%%%%%%%%%%%%%%%%%%%%%%%%%%%%%
%%%%%%%%%%%%%%%%%%%%%%%%%%%%%%%%%%%%%%%%%%%%%%%%%%%%%%%%%%%%%%%%%%%%%%%%%%%%%%%%%%%%%%%%%%%
%%%%%%%%%%%%%%%%%%%%%%%%%%%%%%%%%%%%%%%%%%%%%%%%%%%%%%%%%%%%%%%%%%%%%%%%%%%%%%%%%%%%%%%%%%%%%%%%%%%%%%%%%%%%%%%%%%%%%%%%%%%%%%%%%%%%%%%%%%%%%%%%%%%%%%%%%%%%%%%%%%%%%%%%%%%%%%%%%%%%%%%%%%%%%%%%%%%%%%%%%%%%%%%%%%%%%
\section{Powers of a Conjugacy Class} 
\label{sec:three}
We record the three properties of the exponential map $\exp\colon \g\to G$ used in this section \cite{Varadarajan:1974}: $t \to \exp tX$ is a homomorphism from $\R$ to $G$, $\exp$ is a local diffeomorphism near $0\in\g$ whose local inverse we denote by $\log$, and if $(\pi,V)$ is any representation of $G$, $\exp$ intertwines the $\pi$ with its differential $d\pi$, i.e.,
\begin{equation}
\label{equ:intertwines}
\pi \circ \exp = \exp \circ\ \! d\pi,
\end{equation}
for all $X\in\g$. The fourth property, Theorem \ref{BCH}, is the Baker--Campbell--Hausdorff (BCH) formula, which expresses the product of $n$ elements near $e$ in terms of their exponentials. The next results are developed in \cite[Section 2.15]{Varadarajan:1974} for $n=2$ and can be extended to arbitrary $n$ by induction. 
\begin{thm}[\cite{Varadarajan:1974}, Theorem 2.12.4, Theorem 2.15.4, equation (2.15.5)]
\label{BCH} Let $G$ be a Lie group, $\g$ the Lie algebra of $G$, $S(\g)$ the unit sphere in $\g$ with respect to the Killing form, $B(\g)$ the unit ball, $n\geq 1$, and $X_1,\ldots,X_n \in \g$. There exists a  unique smooth function $r_n\colon \g^n\to \g$, such that, for $\vert t\vert$ sufficiently small, 
\[\exp tX_1\cdots\exp tX_n = \exp(t(X_1+\cdots+X_n) + r_n(tX_1,\ldots,tX_n)).\]
Moreover, for $l=1,2$ consider the two functions
\begin{equation}
\label{equ:smoothrem}
 R_n(t,X_1,\ldots,X_n)= \begin{cases}
           \displaystyle t^{-l}r_n(tX_1,\ldots,tX_n), & t\neq 0, \\
            \quad \; 0 & t=0
\end{cases}
 \end{equation} $R_n$ is smooth when $l=1$, and $R_n$ is bounded (with respect to the Killing form) when $l=2$, $\vert t\vert$ sufficiently small and $X_1,\ldots,X_n\in S$.
\end{thm}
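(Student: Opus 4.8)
The plan is to treat the two-factor case $n=2$ as known — it is precisely Varadarajan's Theorem 2.15.4 together with equation (2.15.5), which already carry the analytic content (convergence of the Campbell--Hausdorff series, smoothness of $r_2$, and the estimates on $t^{-l}r_2$) — and to bootstrap to general $n$ by induction on the number of factors. The scaling assertions about $R_n$ will then follow from the single structural fact that $r_n$ vanishes to second order at the origin of $\g^n$, combined with Taylor's theorem with integral remainder.

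\emph{The induction.} Assume a smooth $r_n\colon\g^n\to\g$, unique near $0$, with $\exp W_1\cdots\exp W_n=\exp\big(W_1+\cdots+W_n+r_n(W_1,\ldots,W_n)\big)$ for $W_1,\ldots,W_n$ in a neighborhood of $0$. Put $A=W_1+\cdots+W_n+r_n(W_1,\ldots,W_n)$, which lies near $0$ when the $W_i$ do, and apply the $n=2$ identity to $\exp A\cdot\exp W_{n+1}$:
\[\exp W_1\cdots\exp W_{n+1}=\exp\big(A+W_{n+1}+r_2(A,W_{n+1})\big).\]
Hence set $r_{n+1}(W_1,\ldots,W_{n+1})=r_n(W_1,\ldots,W_n)+r_2\big(W_1+\cdots+W_n+r_n(W_1,\ldots,W_n),\,W_{n+1}\big)$; this is smooth near $0$ as a composition of smooth maps, completing the inductive step. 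Uniqueness of $r_{n+1}$ near $0$ is immediate because $\exp$ is a diffeomorphism on a neighborhood of $0\in\g$, so the exponent on the right of the defining identity is determined by the left side.

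\emph{The estimates on $R_n$.} Setting all $W_i=tX_i$ and writing $\phi(t)=r_n(tX_1,\ldots,tX_n)$, observe first that $\phi(0)=0$: at $t=0$ the defining identity reads $e=\exp\big(r_n(0,\ldots,0)\big)$ and $\exp$ is injective near $0$, so the zeroth-order part of $r_n$ vanishes. Next, differentiating $\exp tX_1\cdots\exp tX_n=\exp\big(t(X_1+\cdots+X_n)+\phi(t)\big)$ at $t=0$, using the product rule for curves based at $e$ on the left and $d\exp_0=\mathrm{id}$ on the right, gives $X_1+\cdots+X_n=X_1+\cdots+X_n+\phi'(0)$, so $\phi'(0)=0$; since this holds for all $X_i$, the first-order part of $r_n$ vanishes too, i.e.\ $r_n$ vanishes to second order at $0\in\g^n$. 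Taylor's formula with integral remainder now gives, for $t\neq0$, $R_n=t^{-1}\phi(t)=\int_0^1\phi'(st)\,ds$, which extends smoothly through $t=0$ with value $\phi'(0)=0$ and is jointly smooth in $(t,X_1,\ldots,X_n)$ near the origin — this is the $l=1$ claim; and $R_n=t^{-2}\phi(t)=\int_0^1(1-s)\phi''(st)\,ds$ for $t\neq0$, which extends continuously through $t=0$. Because $r_n$ is smooth near $0\in\g^n$, the map $(t,X_1,\ldots,X_n)\mapsto\frac{d^2}{dt^2}r_n(tX_1,\ldots,tX_n)$ is continuous, hence bounded on the compact set $\{|t|\le\epsilon\}\times S(\g)^n$ for $\epsilon$ small; the integral representation of $R_n$ then bounds $|R_n|$ by that same constant, uniformly for $|t|\le\epsilon$ and $X_1,\ldots,X_n\in S(\g)$ — this is the $l=2$ claim.

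\emph{Main obstacle.} The only point requiring care is bookkeeping: one must confirm that the inductive construction keeps $r_n$ smooth on an honest neighborhood of $0\in\g^n$ with joint dependence on all $n$ variables, not merely along individual rays, so that the compactness argument in the last paragraph produces a bound uniform over the sphere. This is transparent from the explicit recursion, since a composition of smooth germs at the origin is again a smooth germ at the origin. I anticipate no further difficulty, as every genuinely analytic ingredient — convergence of the series and smoothness of $r_2$, together with the $n=2$ versions of the $t^{-l}$ estimates — is already supplied by the cited results.
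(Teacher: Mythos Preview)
Your proposal is correct and follows exactly the route the paper indicates: the paper states just before Theorem~\ref{BCH} that ``the next results are developed in \cite[Section 2.15]{Varadarajan:1974} for $n=2$ and can be extended to arbitrary $n$ by induction,'' and gives no further argument beyond the remark on Dynkin's formula. Your inductive construction of $r_{n+1}$ from $r_n$ and $r_2$, together with the Taylor--integral-remainder derivation of the $t^{-l}$ estimates from the second-order vanishing of $r_n$ at the origin, is precisely the detail the paper omits.
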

\begin{remark}
The remainder function $r_n$ is a power series in the commutators of  $X_1,\ldots,X_n$, given in closed form by Dynkin's formula \cite[remark 3]{Varadarajan:1974}. All we need are the properties of $r_n$ stated in equation \eqref{equ:smoothrem}.
\end{remark}
Define $w_n:G^n\times \g\to G$ to be the {\emph{word map}}  \[w_n(g_1,g_2,\ldots,g_n,X) = g_1\exp X g_1^{-1}g_2\exp X g_2^{-1}\cdots g_n\exp X g_n^{-1}.\]
The image of $w_n$ is the set of $n$-fold products of the conjugacy class through $\exp X$; we would like to examine the image of $\log w_n$. To do so requires that the image of $w_n$ be contained in the domain of $\log$.

%%%%%%%%%%%%%%%%%%%%%%%%%%%%%%%%%%%%%%%%%%%%%%%%%%%%%%%%%%%%%%%%%%%%%%%%%%%%%%%%%%%%
\begin{prop}
\label{thm:prodinnhd}
Let $G$ be a compact connected Lie group, $S=S(\g)$ the unit sphere in $\g$ with respect to the Killing form $B=B(\g)$ the unit ball. For all $n>0$, there exists $\delta$ sufficiently small and $\mu>0$ such that for any $1\leq k \leq n$ and $0<t<\delta$ the product of $k$ elements from $\exp(tS)$ is contained in $\exp (t\mu B)$.
\end{prop}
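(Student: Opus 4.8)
The plan is to apply the Baker--Campbell--Hausdorff formula of Theorem~\ref{BCH}, in its $l=1$ form, directly to a product $\exp(tX_1)\cdots\exp(tX_k)$ with $X_1,\ldots,X_k\in S$ and $1\le k\le n$. Fix such a $k$. The identity of Theorem~\ref{BCH} holds on some open neighborhood $U$ of the origin in $\g^k$, and since $S^k\subset\g^k$ is compact there is a $\delta_k>0$ with $(tX_1,\ldots,tX_k)\in U$ whenever $\lvert t\rvert<\delta_k$ and every $X_i\in S$. For such $t$,
\[\exp(tX_1)\cdots\exp(tX_k)=\exp\!\bigl(t(X_1+\cdots+X_k)+r_k(tX_1,\ldots,tX_k)\bigr).\]
By the $l=1$ clause of Theorem~\ref{BCH} we may write $r_k(tX_1,\ldots,tX_k)=t\,R_k(t,X_1,\ldots,X_k)$ with $R_k$ smooth on $\R\times\g^k$, so the exponent above equals $tY$ with $Y:=X_1+\cdots+X_k+R_k(t,X_1,\ldots,X_k)$.

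It remains to bound $\lvert Y\rvert$ in the Killing norm, uniformly over the inputs. By the triangle inequality $\lvert X_1+\cdots+X_k\rvert\le k\le n$, since each $X_i$ has unit norm. Set $\delta:=\tfrac{1}{2}\min_{1\le k\le n}\delta_k>0$; for each $k\le n$ the continuous function $R_k$ is bounded, say by $M_k$, on the compact set $[-\delta,\delta]\times S^k$. With $\mu:=n+\max_{1\le k\le n}M_k$ we then have $\lvert Y\rvert\le\mu$ for every $k\le n$, every $t$ with $\lvert t\rvert<\delta$, and every choice of $X_i\in S$. Hence, for $0<t<\delta$, any product of $k\le n$ elements of $\exp(tS)$ equals $\exp(tY)$ with $\lvert tY\rvert\le t\mu$, i.e.\ lies in $\exp(t\mu B)$, which is the assertion. (Alternatively one could induct on $k$ using only the $n=2$ case, but then one must rescale $tY\mapsto t\mu\cdot(Y/\mu)$ at each step, which is more awkward than the direct argument above.)

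The step needing the most care is the uniformity of the threshold $\delta_k$ over $(X_1,\ldots,X_k)\in S^k$: Theorem~\ref{BCH} guarantees the identity only for $\lvert t\rvert$ ``sufficiently small,'' a priori depending on the $X_i$, and this must be upgraded to a single $\delta_k$ valid for all unit vectors. Compactness of the sphere $S$ supplies exactly this --- the set of $(tX_1,\ldots,tX_k)$ with $\lvert t\rvert<\delta_k$ and $X_i\in S$ contracts into any prescribed neighborhood of $0\in\g^k$ as $\delta_k\to0$ --- and finiteness of $\{1,\ldots,n\}$ then yields the single pair $(\delta,\mu)$ that works for all $k\le n$ at once.
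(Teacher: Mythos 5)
Your proof is correct and follows essentially the same approach as the paper's: apply the BCH formula to write the product as $\exp(tY)$, bound $\lvert Y\rvert$ uniformly by a triangle inequality plus a bound on the remainder term over the compact set $[-\delta,\delta]\times S^k$, and take a max over $k\le n$. The only cosmetic difference is that you invoke the $l=1$ clause of Theorem~\ref{BCH} (smoothness of $t^{-1}r_k$, hence boundedness on a compact set) where the paper uses the $l=2$ clause directly (boundedness of $t^{-2}r_k$, giving $\lvert r_k\rvert\le t^2m_k$); both yield a uniform $\mu$ and the same conclusion.
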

\begin{proof}
Let $n>0$ be given, $R$ the the injectivity radius of $G$, $\delta$ small enough so that $\exp(\delta B)^k\subset \exp RB$, for all $1\leq k \leq n$. Let $1\leq k \leq n$, $0<t<\delta$, $X_1,\ldots, X_k\in S(\g)$; by BCH, \[\log(\exp(t X_1)\exp(t X_2)\cdots\exp(t X_k))=t X_1 +\cdots +t X_k + r_k(t X_1,\ldots,t X_k),\]
and by equation \eqref{equ:smoothrem} (case $l=2$), there exists a bound $m_k>0$ such that $|r_k(t X_1,\ldots,t X_k)| < t^2 m_k$ for all $X_1\ldots,X_k\in S(\g)$.  Consequently, 
\[\left|\log(\exp(tX_1)\exp(t X_2)\cdots\exp(t X_k))\right| \leq t k + t^2m_k.\]
Choosing $\mu= \max\{k +\delta m_k\mid 1\leq k\leq n\}$, observe
\[tk+ t^2m_k < t(k + \delta m_k) \leq t\mu\]
implies
\[\log(\exp(tX_1)\exp(tX_2)\cdots\exp(tX_k)) \in t\mu B\]
for all $1\leq k \leq n$, $0<t<\delta$, $X_1,\ldots, X_k\in S.$ The proof follows by exponentiation.
\end{proof}
The next lemma will provide sufficient control over the $r_n$ term of the BCH formula to prove Theorem \ref{thm:Cnopennhd} for sufficienly large multiples of conjugacy classes $C$ near $e$. Then Proposition \ref{thm:prodinnhd} is used to fill in intermediate powers of $C$. Abusing notation, write $g(u,V)$ for the image $g(\{u\}\times V)$.
\begin{lemma}
\label{thm:betterfact}
Let $M,N,P$ be smooth manifolds, $g:M\times N \to P$ a smooth function, $g^a$ the restriction of $g$ to the slice $\{a\}\times N$. Suppose $g(a_0,b_0)= c_0$ and $g^{a_0}$ is a submersion at $b_0$. Then there is an open set $U\subset M$ containing $a_0$, an open set $V\subset N$ containing $b_0$, an open set $W\subset P$ containing $c_0$, such that $W\subset g(u,V)$ for all $u\in U$.\\
\end{lemma}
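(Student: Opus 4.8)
The plan is to deduce the result from the fact that submersions are open maps. Rather than work with $g^{a_0}$ directly, I would introduce the ``partial graph'' map
\[
\Phi\colon M\times N\to M\times P,\qquad \Phi(a,b)=(a,g(a,b)),
\]
and show it is submersive at $(a_0,b_0)$. The point of passing to $\Phi$ is that its image is already ``fibered over $M$'': the slice of $\Phi(M\times N)$ over a fixed $u\in M$ is, locally, exactly $g(u,N)$, so producing a single open set inside the image of $\Phi$ will automatically give the uniform-in-$u$ conclusion.

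First I would compute the differential. Under the identifications $T_{(a_0,b_0)}(M\times N)\cong T_{a_0}M\oplus T_{b_0}N$ and $T_{(a_0,c_0)}(M\times P)\cong T_{a_0}M\oplus T_{c_0}P$, one has $d\Phi_{(a_0,b_0)}(v,w)=(v,\,Av+Bw)$, where $A$ is the restriction of $dg_{(a_0,b_0)}$ to $T_{a_0}M$ and $B=d(g^{a_0})_{b_0}$. Given any target $(v',p)$, setting $v=v'$ and solving $Bw=p-Av'$ succeeds because $B$ is surjective by hypothesis, so $d\Phi_{(a_0,b_0)}$ is surjective. By Corollary \ref{thm:submersiveinnhd}, $\Phi$ is a submersion on an open set $\Omega\subset M\times N$ with $(a_0,b_0)\in\Omega$; by the Local Submersion Theorem (Theorem \ref{thm:LST}), near each point of $\Omega$ the map $\Phi$ is a coordinate projection read through charts, hence an open map on $\Omega$. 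Therefore $\Phi(\Omega)$ is open in $M\times P$ and contains $(a_0,c_0)$, so it contains a product $U\times W$ with $a_0\in U$ open in $M$ and $c_0\in W$ open in $P$. I would then set $V=\pi_N(\Omega)$, the image of $\Omega$ under the projection $M\times N\to N$, which is open and contains $b_0$. To finish: for $u\in U$ and $c\in W$ we have $(u,c)\in\Phi(\Omega)$, so some $(u,b)\in\Omega$ satisfies $\Phi(u,b)=(u,c)$, i.e.\ $g(u,b)=c$; and $(u,b)\in\Omega$ forces $b\in\pi_N(\Omega)=V$, so $c\in g(u,V)$. Hence $W\subset g(u,V)$ for every $u\in U$.

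I do not expect a serious obstacle; the only point needing care is the uniformity, namely that a single $V$ serves all $u\in U$, and this is handled structurally by taking $V$ to be the $N$-projection of the one neighborhood $\Omega$ on which $\Phi$ is submersive, so that no compactness or iterated shrinking argument is needed. An equivalent route, if one prefers the Implicit Function Theorem to ``submersions are open'': choose coordinates near $b_0$ splitting $N$ as $\R^r\times\R^{q-r}$ so that $B$ restricted to the first factor is invertible, apply the Implicit Function Theorem to $h(a,y)=g(a,y,0)$ at $(a_0,0)$ to obtain a smooth solution $y=\psi(a,c)$ depending jointly on $(a,c)$ on a product neighborhood, and read off $U$, $V$, $W$ from its domain and range; this yields the same statement with essentially the same bookkeeping.
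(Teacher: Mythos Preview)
Your proposal is correct and follows essentially the same approach as the paper: both introduce the partial graph map $\Phi(a,b)=(a,g(a,b))$, verify it is submersive at $(a_0,b_0)$, use the Local Submersion Theorem to conclude its image of a neighborhood is open, and then extract a product $U\times W$ in $M\times P$ to read off the desired inclusion. The only cosmetic difference is bookkeeping on $V$: the paper starts from a product neighborhood $U\times V$ in the domain and shrinks on the image side, whereas you take a general $\Omega$ and set $V=\pi_N(\Omega)$; both choices work for the same reason.
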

\begin{proof}
We construct, from $g$, a smooth function so as to apply the local submersion theorem. Define $f\colon M\times N\to M\times P$ to be
\[f(m,n) = (m,g(m,n)).\]
As $df = id\times dg$, the image of $dg$ at $(a_0,b_0)$ includes the image of $d(g^{a_0})$ at $b_0$ and $g^{a_0}$ submersive at $b_0$, $f$ is submersive at $(a_0,b_0)$.  By Theorem \ref{thm:LST}, $f$ maps some open neighborhood $U\times V$ of $(a_0,b_0)$ onto an open neighborhood $Q$ of $(a_0,c_0)$. $Q$ contains a neighborhood of the form $U_1\times W$. Comparing second coordinates, we see $g(u,V)$ contains $W$ for all $u\in U_1$.
\end{proof}

\begin{defn}
Let $G$ be a Lie group, $C$ a conjugacy class of $G$, $S$ a subset of $G$. We say that $C$ is $\textit{represented}$ in $S$ if $S\cap C\neq \emptyset$, i.e. $C$ is represented by some $x\in S$. 
\end{defn}
The next Theorem proves Theorem \ref{thm:Cnopen} for conjugacy classes sufficiently close to $e$.  
\begin{thm}
\label{thm:Cnopennhd}
Let $G$ be an adjoint simple Lie group. There exists a punctured neighborhood $U_e^*$ of the identity of $G$, such that, if $C$ is a conjugacy class represented in $U_e^*$, then, for all $n$ sufficiently large, $C^n$ contains $e$ as an interior point.
\end{thm}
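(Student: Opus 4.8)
The plan is to transfer the linearized result at the end of \cref{sec:two} through the exponential map, using the BCH control from \cref{BCH} and \cref{thm:prodinnhd}, and then apply the parametrized submersion of \cref{thm:betterfact} to get an open image depending smoothly on the conjugacy class parameter. First I would fix a nonzero $X_0\in S(\g)$ and apply \cref{thm:zeroinconvexhull}, \cref{thm:convexsubmersive}, and \cref{thm:linearized} to the map $f(g)=\Ad(g)X_0$ (legitimate since $\Ad$ is irreducible on $\g$ for $G$ simple) to obtain an integer $N$ and points $x_1,\dots,x_N\in G$ with $\sum_i\Ad(x_i)X_0=0$ and $(g_1,\dots,g_N)\mapsto\sum_i\Ad(g_i)X_0$ submersive there. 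The target is the word map $w_N$: writing $w_N(g_1,\dots,g_N,tX_0)$ and taking $\log$, BCH gives
\begin{equation*}
\log w_N(g_1,\dots,g_N,tX_0)=t\sum_{i=1}^N\Ad(g_i)X_0 + t^2 R_N(t,\Ad(g_1)X_0,\dots,\Ad(g_N)X_0),
\end{equation*}
where $R_N$ is bounded for $|t|$ small and the arguments on the unit sphere (case $l=2$ of \cref{equ:smoothrem}); here I am using that conjugation commutes with $\exp$ so $g_i\exp(tX_0)g_i^{-1}=\exp(t\Ad(g_i)X_0)$, and that $|\Ad(g_i)X_0|=|X_0|$ by $\Ad$-invariance of the Killing form. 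Hence for $t\neq 0$ the map $\Phi_t(g_1,\dots,g_N):=t^{-1}\log w_N(\cdot,tX_0)=\sum_i\Ad(g_i)X_0 + t\,R_N(\cdots)$ is a $C^\infty$ perturbation, uniformly small as $t\to 0$, of the submersive map from \cref{sec:two}, hence itself submersive near $(x_1,\dots,x_N)$ for all sufficiently small $t$, and it vanishes at a nearby point $z(t)$ by the implicit function theorem applied exactly as in the proof of \cref{thm:linearized}.

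Next I would package the $t$-dependence as the parameter in \cref{thm:betterfact}. Set $M=(-\delta,\delta)$ (the $t$-axis, or rather its image under a suitable reparametrization of conjugacy classes near $e$), $N=G^N$, $P=G$, and $g(t,(g_1,\dots,g_N))=w_N(g_1,\dots,g_N,tX_0)$. For fixed small $t_0\neq 0$ we have just arranged a point $b_0=z(t_0)$ with $g(t_0,b_0)=e$ and $g^{t_0}$ submersive at $b_0$ (submersivity of $w_N(\cdot,t_0X_0)$ at $z(t_0)$ follows from submersivity of $\Phi_{t_0}$ since $\log$ is a local diffeomorphism near $0$ and multiplication by $t_0$ is invertible). \cref{thm:betterfact} then yields open sets $t_0\in U\subset M$, $b_0\in V\subset G^N$, $e\in W\subset G$ with $W\subset g(t,V)$ for every $t\in U$. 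Since the conjugacy class through $\exp(tX_0)$ is nontrivial for $t\neq 0$ (as $\exp$ is injective near $0$ and $X_0\neq 0$), this says: every conjugacy class $C$ meeting $\exp(UX_0)$ satisfies $C^N\supseteq W$, a fixed neighborhood of $e$.

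It remains to (i) handle conjugacy classes represented near $e$ but not on the one-parameter ray $\exp(\R X_0)$, and (ii) pass from the single exponent $N$ to all sufficiently large exponents. For (i): every conjugacy class sufficiently close to $e$ is represented by $\exp X$ for some small $X\neq 0$, and $X=|X|\,Y$ with $Y\in S(\g)$; the constructions above go through verbatim with $X_0$ replaced by any $Y\in S(\g)$, and the bound on $R_N$ in \cref{equ:smoothrem} is uniform over the compact sphere $S(\g)$, so the size of $W$ and the window $U$ of admissible $t=|X|$ can be chosen uniformly in $Y$. Combined with the local diffeomorphism $\exp$, this produces a single punctured neighborhood $U_e^*$ of $e$ and a single $W\ni e$ such that $C^N\supseteq W$ whenever $C$ is represented in $U_e^*$. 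For (ii): if $C^N\supseteq W$ with $W$ a neighborhood of $e$, then for any $k\geq 1$, $C^{kN}\supseteq W^k$, still a neighborhood of $e$; and to fill in exponents $n$ with $kN<n<(k+1)N$, use \cref{thm:prodinnhd} — after shrinking $U_e^*$ so that any conjugacy class represented there lies in $\exp(tS)$ for $t$ below the $\delta$ of \cref{thm:prodinnhd} — to ensure the extra $n-kN\le N$ factors land inside $\exp(t\mu B)$, hence inside a prescribed small neighborhood of $e$, so that $C^n\supseteq W\cdot(\text{small ball})\supseteq$ a neighborhood of $e$; here one should absorb this small ball by first choosing $W$ with room to spare, e.g. replace $W$ by $W^2$ using $C^{2N}$. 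The main obstacle I anticipate is bookkeeping the two independent small parameters — the BCH radius $\delta$ and the perturbation scale $t$ — uniformly over the sphere $S(\g)$ while simultaneously keeping the output neighborhood $W$ fixed; the substance is already in \cref{thm:linearized} and \cref{thm:betterfact}, so the work here is making the uniformity explicit rather than any new idea.
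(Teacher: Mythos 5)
Your approach is structurally the same as the paper's, but there is a genuine gap in how you apply \cref{thm:betterfact}, and it prevents you from getting a \emph{punctured neighborhood of} $e$ rather than just a punctured annulus. You apply \cref{thm:betterfact} to $g(t,\cdot)=w_N(\cdot,tX_0)$ at a fixed $t_0\neq 0$, obtaining a window $U\ni t_0$ and a fixed $W\ni e$ with $W\subset C^N$ whenever $C$ meets $\exp(UX_0)$. But $U$ is an interval around $t_0$; nothing forces it to reach down to $0^+$, and as $t_0\to 0^+$ the output neighborhood $W$ necessarily shrinks (by \cref{thm:prodinnhd}, $C_t^N\subset\exp(t\mu B)\to\{e\}$). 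So covering $(0,\epsilon)$ requires infinitely many windows with shrinking $W$'s, and there is no single exponent $N_0$ and no single punctured neighborhood $U_e^*$ coming out of this. Note that $w_N(\cdot,0)\equiv e$ is not a submersion, so you cannot apply \cref{thm:betterfact} to $w_N$ at $t=0$ to repair this.

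The fix is exactly the rescaled map you already wrote down for the submersivity discussion: apply \cref{thm:betterfact} to $\Phi(t,g)=t^{-1}\log w_N(g,tX_0)=\sum_i\Ad(g_i)X_0+R_N(t,\Ad(g_1)X_0,\dots,\Ad(g_n)X_0)$ with target $P=\g$ rather than $P=G$, using the $l=1$ (smooth, not merely bounded) version of $R_N$ from \cref{equ:smoothrem} so that $\Phi$ is smooth in $(t,g)$ and is defined and submersive at $t=0$, where $\Phi(0,x)=0$. Then \cref{thm:betterfact} applied at $(0,x)$ gives a window $(-\epsilon,\epsilon)$ around $t=0$ and a ball $\rho B\subset\g$ with $\rho B\subset\Phi(t,V)$ for all $t\in(-\epsilon,\epsilon)$; exponentiating and undoing the scaling, $\exp(t\rho B)\subset w_N(V,tX_0)\subset C_t^N$ for all $t\in(0,\epsilon)$. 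This gives a single scale-invariant constant $\rho$ covering all small $t$ at once, which is what makes the comparison with the $\mu$ from \cref{thm:prodinnhd} work and yields a uniform exponent. Secondly, your uniformity claim over $Y\in S(\g)$ in step (i) is asserted but not argued: the linearized data $N,x_1,\dots,x_N$ from \cref{thm:linearized} genuinely depend on $Y$, so ``the construction goes through verbatim'' does not by itself give a uniform $W$ and $U$. The paper handles this by taking $(t,Y)\in(-\delta,\delta)\times S$ as the parameter in \cref{thm:betterfact}, which produces an open set $U_X\subset S$ of directions around $X$ for free, and then covering the compact sphere $S$ by finitely many such $U_X$ and taking the maximum exponent. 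You should do the same; the compactness step is essential, not bookkeeping. Your part (ii) has the right absorbing idea (take $W^k$ so it contains $\exp(t\mu B)$), though the phrase ``$C^n\supseteq W\cdot(\text{small ball})$'' misstates the logic — what you need is that $W^k$ contains the inverse of an element of $C^{n-kN}$, so that translating the open set $W^k\subset C^{kN}$ by that element still contains $e$.
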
 
%%%%%%%%%%%%%%%%%%%%%%%%%%%%%%%%%%%%%%%%%%%%%%%%%%%%%%%%%%%%%%%%%%%%%%%%%%%%%%%%%%%%%%%%%%%%%%%%%%%%%%%%%%%%
%%%%%%%%%%%%%%%%%%%%%%%%%%%%%%%%%%%%%%%%%%%%%%%%%%%%%%%%%%%%%%%%%%%%%%%%%%%%%%%%%%%%%%%%%%%%%%%%%%%%%%%%%%%%
\begin{proof}
Denote by $S$ the unit sphere in $\g$ with respect to the Killing form, $B$ the unit ball in $\g$. As $G$ is adjoint simple, $\g$ is a real vector space on which the adjoint representation of $G$ acts irreducibly. For fixed $X\in S$, let $f(g) = \Ad(g)X$; Theorem \ref{thm:zeroinconvexhull}, specialized to $\pi = \Ad, v=X$, asserts the existence of points $x_1,\ldots,x_m\in G$, for which zero lies in the interior of the convex hull of \[f(x_1), f(x_2) \ldots, f(x_m),\]  so that Corollary \ref{thm:convexsubmersive} implies \[df_{x_1}T_{x_1}(G) + \cdots +  df_{x_m}T_{x_m}(G)= V.\]  
Write $g= (g_1,\ldots,g_n)$ and let $F_1,F_2\ldots$ be the family of maps where $F_n:S\times G^n \to \g$ is given by \[F_n(Y, g) =  \Ad(g_1)Y+\Ad(g_2)Y+\cdots+\Ad(g_n)Y,\] $F_n^Y$ the restriction of $F_n$ to the slice $\{Y\}\times G^n$. As $f$ satisfies the convexity and submersivity conditions in the supposition of Theorem \ref{thm:linearized}, then for $n$ sufficiently large, there is a point $z= z = (z_1,\ldots,z_n) \in G^n$ satisfying 

\[F_n(X,z) = \Ad(z_1)X+\Ad(z_2)X+\cdots+\Ad(z_n)X = 0,\] 
and $F_n^{X}$ is submersive at $x$. Fix one such $n,z$ pair, and let $R>0$ is the injectivity radius of $G$, $\delta$ small enough so that $\exp(\delta B)^k\subset \exp RB$, for all $1\leq k \leq n$. Since spheres in $\g$ with respect to the Killing form are $\Ad$-invariant, $\exp(\delta \Ad(G)B)^k\subset \exp (RB)$, for all $1\leq k \leq n$.
Recall the remainder function $r_n$ of the BCH formula (\ref{BCH}); consider the function $h: (-\delta,\delta)\times S \times G^n\to \g$ is given by
\begin{equation}
 h(t,Y,g)= \begin{cases}
           \displaystyle t^{-1}(F_n(tY, g)+r_n(t\Ad(g_1)Y,\ldots,t\Ad(g_n)Y)), & t\neq 0, \\
            \quad \;  F_n(Y,g) & t=0
\end{cases}
 \end{equation}
By equation \eqref{equ:smoothrem} (case $l=1$), $h$ is smooth for all $\vert t \vert$ sufficiently small. Further, $h(0,X,z) = F_n^{X}(z)=0$. Apply Lemma \ref{thm:betterfact} to the special case $M = (-\delta,\delta)\times S$, $a_0 = (0,X)$, $N=G^n$, $b_0=x$, $P=\g$, $c_0 = 0$ to assert the existence of $\epsilon>0$, an open set $U_X\subset S$ containing $X$, an open set $V\subset G^n$ containing $z$, and an open ball $\rho B\subset \g$, such that 
\begin{equation}
\label{ball}
\rho B\subset h(t,Y,V), 
\end{equation} for all $Y\in U_X$, $t\in (-\epsilon,\epsilon)$. Thus, for $t\in (0,\epsilon)$ and $Y\in U_X$, BCH and the intertwining property of equation \eqref{equ:intertwines} shows
\begin{align*}
\label{param}
& \exp th(t,Y,g)\\ =& \exp(t\Ad(g_1)Y)\exp( t\Ad(g_2)Y)\cdots\exp( t\Ad(g_n)Y) \notag \\
	=& g_1\exp(tY)g_1^{-1}g_2\exp(tY)g_2^{-1}\cdots g_n\exp(tY))g_n^{-1} \notag \\
	=& w_n(g_1,\ldots,g_n, tY)\notag
\end{align*}
Notice, if $C$ is the conjugacy class $C=\exp(t\Ad(G)Y)$, then $\exp th(t,Y,G^n) = C^n$. Consequently, for all $t\in (0,\epsilon), Y\in U_X\subset S$, if $C=\exp (t\Ad(G)Y)$, then $C^n$ contains the open neighborhood $\exp t\rho B$ of $e$. It follows from
\[\exp(bB)\subset \exp(B)^b\]
that $C^{bn}$ contains $\exp tb\rho B$ for all positive integers $b$. Since $t<\epsilon<\delta$ and $\delta$ satisfies the supposition of Proposition \ref{thm:prodinnhd}, there exists $\mu >0$ such that $C^i\subset \exp t\mu B$ for all $1\leq i\leq n$. Selecting $b_0$ to satisfy $b_0\rho \geq \mu$, it follows that $e\in \operatorname{Int}(C^i)$ for all $i\geq b_0n$. Thus, for any conjugacy class $C$ represented in the exponentiated cone $\{\exp(tY)\mid t\in(0,\epsilon), Y\in U_X\}$, $e\in \operatorname{Int}(C^i)$ for all $i\geq b_0n$.  Repeating the above argument at each point $X\in S$ yields a cover of $S$ by neighborhoods $U_X$ satisfying equation \ref{ball}. By compactness of $S$, we may produce the desired punctured neigborhood of $e$ as a finite union of exponentiated cones $\mathcal{C}_1,\ldots,\mathcal{C}_l$, taking the maximum necessary exponent over this union.  
\end{proof}
%%%%%%%%%%%%%%%%%%%%%%%%%%%%%%%%%%%%%%%%%%%%%%%%%%%%%%%%%%%%%%%%%%%%%%%%%%%%%%%%%%%%%%%%%%%%%%%%%%%%%%%%%%%%%%%%%%%%
%%%%%%%%%%%%%%%%%%%%%%%%%%%%%%%%%%%%%%%%%%%%%%%%%%%%%%%%%%%%%%%%%%%%%%%%%%%%%%%%%%%%%%%%%%%%%%%%%%%%%%%%%%%%%%%%%%%%%%
  
\begin{proof}[Proof of Theorem \ref{thm:Cnopen}]
Let $G$ be adjoint simple; by Theorem \ref{thm:Cnopennhd}, there exists a punctured open neighborhood $U = U_e^*$ of the identity, and sufficiently large $m$ such that, for any conjugacy class $C$ represented in $U_e^*$, $C^m$ contains $e$ as an interior point. We will complete the proof by proving the following statement: for $N$ sufficiently large, $C^N = G$ for any conjugacy class $C$ represented in $G \setminus U_e$.  

For $C\neq \{e\}$ and points $x_1,x_2,\ldots,x_n\in C$, consider the map $u_n\colon G^n\to G$ given by
\[u_n(g_1,\ldots,g_n) = g_1x_1g_1^{-1}g_2x_2g_2^{-1}\cdots g_nx_ng_n^{-1}x_n^{-1}\cdots x_1^{-1}.\]  The image of the differential of $u_n$ at $(e,\ldots,e)$ is the subspace  
\begin{align}
\label{Tanspace}
L_n(x_1,\ldots,x_n):= (1-\Ad(x_1))(\g)&+\Ad(x_1)(1-\Ad(x_2))(\g)+\cdots \\ 
&+\Ad(x_1\cdots x_{n-1})(1-\Ad(x_n))(\g)\notag.
\end{align}
As \[L_n(x) = (1-\Ad(x_1))(\g) + \Ad(x_1)L_{n-1}(x_2,\ldots,x_n),\]
we have $\dim L_n(x_1,\ldots,x_n) \geq \dim L_{n-1}(x_2,\ldots,x_n)$ with equality if and only if \[(\Ad(x_1^{-1})-1)(\g) \subset L_{n-1}(x_2,\ldots,x_n).\]
Notice that $S = \operatorname{span}_{c\in C}(\Ad(c^{-1})-1)(\g)$ is invariant under the adjoint action of $G$. As $c$ is not in the center of $G$, $S\neq {0}$. Since the adjoint representation of $G$ is irreducible, $S = \g$. Hence we can always increase the dimension of $L_{n+1}$, by prepending a word in $C^n$ by a suitable element in $C$, unless the dimension of $L_n$ is already $\dim(\g)$. 
Consequently, for each $x_1\in C$ there exist points $x_2,\ldots,x_n\in C$, $n=\dim G$ so that $u_n$ is submersive at $(e,\ldots,e)$. Lemma \ref{thm:betterfact}, specialized to $M=N=G^n, P=G, m_0=(e,\ldots,e), n_0=(x_1,x_2,\ldots,x_n), p_0=e$, and the smooth map $W_n:G^n\times G^n\to G$ given by
\[W_n(g_1,\ldots,g_n,y_1,y_2,\ldots,y_n) = g_1y_1g_1^{-1}g_2y_2g_2^{-1}\cdots g_ny_ng_n^{-1}y_n^{-1}\cdots y^{-1}\]
produces neighborhoods $U_i$ of $x_i$, $1\leq i \leq n$, and an open neighborhood $V_e$ of $e$, for which \[V_e\subset W_n(G^n,y_1,y_2,\ldots,y_n),\] for all $y_i\in U_i$. As $x_1,x_2,\ldots,x_n\in C$, there exist $h_2,\ldots,h_n\in G$ such that $x_i = h_ix_1h_i^{-1}$, and since the conjugation action of $G$ on $C$ is a smooth and invertible, some neighborhood $V$ of $x_1$ satisfies $V\subset U$ and $h_iVh_i^{-1}\subset U_i$, $2\leq i\leq n$.  
Thus, for any conjugacy class $C'$ represented in $V$, there are $n$ elements $y_1, y_2, \ldots,y_n$ from $C'$ for which \[V_e\subset (C')^ny_n^{-1}\cdots y_2^{-1}y_1^{-1}.\] By compactness of $G$, there exists $m\in\N$ such that $V_e^m=G$, so that
\[G=((C')^ny_n^{-1}\cdots y_2^{-1}y_1^{-1})^m\]
Since $yC'=C'y, yG=G$ for all $y\in G$, $(C')^{mn} = G.$ 
Repeating the above construction for each $x$ in the compact set $G\setminus U_e$ and taking the maximum necessary exponent over a finite subcover proves the Theorem.         
\end{proof}
\begin{remark}
The argument for submersivity of the word map $u_n(g_1,\ldots,g_n)$ at $(e,\ldots,e)$ is similar to an argument in the proof of Proposition 3.2 of \cite{Larsen-Shalev:2009}, where it was used to prove smoothness of $u_n$ for a larger class of groups.
\end{remark}
\begin{remark}
Optimizing the smallest exponent in Theorem \ref{thm:Cnopen} is desired, but not necessary to proceed.
\end{remark}
\begin{remark}
If $G$ is compact, connected, simple but not adjoint simple, then Theorem \ref{thm:Cnopen} is false. Consider a conjugacy class $xC$, $x\in Z(G)$, $C = \exp(\Ad (G)X)$, $X\in\g$. $Z(G)$ has finite order, say $m$ \cite{Fulton-Harris:1991}. For any positive integer $n$, one can find a pair $X,k$ where $X$ is sufficiently near $0$ in $\g$, $k\in\N$ greater than $n$ but coprime to $|x|$ where $(xC)^k$ is contained in an open neighborhood around some element of $Z(G)$ that does not intersect $e$.    
\end{remark}
\begin{remark}
Products of conjugacy classes $C_1,C_2\cdots, C_k$ ought to be suitable for investigation using the methods of Theorems \ref{thm:Cnopen} and \ref{thm:linearized}. 
\end{remark}

\section{Character Estimates}
\label{sec:four}
\begin{lemma}
\label{thm:closedarc}
Let $G$ be adjoint simple, $A$ a closed arc of the unit circle in $\complex$. If $1\notin A$, then there exists $\epsilon >0$  that satisfies the following property: For all characters $\chi$ of finite degree and all $g\in G$, if 
\begin{equation}
\label{sector}
\frac{\chi(g)}{\chi(e)} = (1-\delta)\omega 
\end{equation}
and $\omega\in A$, $0\leq \delta \leq 1$, then  $\delta \geq \epsilon$.
     
\end{lemma}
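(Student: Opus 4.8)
The plan is to argue by contradiction using compactness, the exponent $n$ of Theorem~\ref{thm:Cnopen} supplying the essential uniformity. It suffices to treat irreducible $\chi$ (the general statement then follows once Theorem~\ref{thm:smalldisk} is in hand, from convexity of the disk $D$ and the fact that $D$ touches the unit circle only at $1$), and one may assume $\chi$ nontrivial. Suppose the conclusion fails for a closed arc $A$ with $1\notin A$: there are nontrivial irreducible characters $\chi_m$, elements $g_m\in G$, points $\omega_m\in A$, and reals $\delta_m\to 0$ with $z_m:=\chi_m(g_m)/\chi_m(e)=(1-\delta_m)\omega_m$. Passing to a subsequence, $g_m\to g_\infty$ in $G$, $\omega_m\to\omega_\infty\in A$ (so $\omega_\infty\neq 1$ and $z_m\to\omega_\infty$), and $\chi_m(e)$ is either bounded or tends to $\infty$; in all cases $|z_m^{\,n}|=(1-\delta_m)^{n}\to 1$ for each fixed $n$. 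The goal is to contradict this.

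The engine is the convolution identity, valid for every $g\in G$ and every $n$,
\[
\Big(\frac{\chi(g)}{\chi(e)}\Big)^{n}
=\int_{G^{n}}\frac{\chi\big(g_{1}gg_{1}^{-1}g_{2}gg_{2}^{-1}\cdots g_{n}gg_{n}^{-1}\big)}{\chi(e)}\,dg_{1}\cdots dg_{n}
=\int_{G}\frac{\chi(x)}{\chi(e)}\,d\nu_{g}(x),
\]
$\nu_{g}$ being the push-forward of the product Haar measure under the $n$-fold conjugation map $\Phi_{g}(g_{1},\dots,g_{n})=g_{1}gg_{1}^{-1}\cdots g_{n}gg_{n}^{-1}$, whose image is $C^{n}$, $C$ the class of $g$. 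The submersion steps in the proofs of Theorems~\ref{thm:Cnopen} and~\ref{thm:Cnopennhd} show that $\Phi_{g}$ (for a suitable exponent, which I absorb into $n$) is a submersion on an open subset of $G^{n}$ whose image is a neighborhood of some $x_{0}(g)\in C^{n}$; since a submersion pushes a smooth measure forward to one with continuous, strictly positive density on its image, $\nu_{g}$ dominates $c_{g}\,\mathbbm{1}_{B_{g}}\,dx$ on a ball $B_{g}\subseteq C^{n}$ with $c_{g}>0$ (and, when $g$ is near $e$, one may take $B_{g}\ni e$). Writing $\nu_{g}=c_{g}\mathbbm{1}_{B_{g}}dx+\nu_{g}'$ with $\nu_{g}'\geq0$ and taking moduli,
\[
1-|z_{m}^{\,n}|\;\geq\;c_{m}\Big(\vol(B_{m})-\Big|\int_{B_{m}}\frac{\chi_{m}(x)}{\chi_{m}(e)}\,dx\Big|\Big),\qquad c_{m}:=c_{g_{m}},\ B_{m}:=B_{g_{m}},
\]
so it is enough to keep the right-hand side bounded away from $0$.

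Two of the cases are quick. If $\chi_{m}(e)$ stays bounded then, there being only finitely many irreducibles of each degree, $\chi_{m}$ is eventually a fixed $\pi$; since $|z_{m}|\to1$, equality in the triangle inequality forces all eigenvalues of the unitary matrix $\pi(g_{\infty})$ to coincide, hence $\pi(g_{\infty})$ is scalar, and faithfulness of $\pi$ (as $G$ is adjoint simple) forces $g_{\infty}\in Z(G)=\{e\}$, whence $z_{m}\to1$ and $\omega_{\infty}=1$, a contradiction. If $\chi_{m}(e)\to\infty$ and $g_{\infty}\neq e$, the submersion of $\Phi_{g}$ at the relevant point persists for $g$ near $g_{\infty}$ (the required uniformity is built into the proof of Theorem~\ref{thm:Cnopen}), so $\nu_{g_{m}}$ dominates $c\,\mathbbm{1}_{B}\,dx$ for a fixed ball $B$ and fixed $c>0$; then $\big|\int_{B}\tfrac{\chi_{m}}{\chi_{m}(e)}dx\big|=|\langle\chi_{m},\mathbbm{1}_{B}\rangle|/\chi_{m}(e)\leq\|\chi_{m}\|_{2}\sqrt{\vol(B)}\,/\chi_{m}(e)=\sqrt{\vol(B)}\,/\chi_{m}(e)\to0$ by Cauchy--Schwarz and $\|\chi_{m}\|_{2}=1$, so $1-|z_{m}^{\,n}|\geq c\,\vol(B)-o(1)$, contradicting $|z_{m}^{\,n}|\to1$.

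The remaining case, $\chi_{m}(e)\to\infty$ with $g_{\infty}=e$, is the crux and the main obstacle. Write $g_{m}=\exp(t_{m}Y_{m})$, $Y_{m}$ on the Killing unit sphere $S$, $t_{m}\to0^{+}$, and (subsequence) $Y_{m}\to Y_{\infty}\in S$. By the BCH estimates of Theorem~\ref{BCH}, the rescaled maps $t_{m}^{-1}\log\Phi_{g_{m}}(g_{1},\dots,g_{n})=\Ad(g_{1})Y_{m}+\cdots+\Ad(g_{n})Y_{m}+O(t_{m})$ converge in $C^{\infty}$ to $(g_{1},\dots,g_{n})\mapsto\Ad(g_{1})Y_{\infty}+\cdots+\Ad(g_{n})Y_{\infty}$, which by Theorems~\ref{thm:zeroinconvexhull},~\ref{thm:convexsubmersive} and~\ref{thm:linearized} (with $f(x)=\Ad(x)Y_{\infty}$) is submersive at a point over $0$; hence the rescaled push-forwards dominate a fixed positive multiple of Lebesgue measure on a fixed ball $\rho B(\g)$, uniformly in $m$. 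Transporting through $\exp(t_{m}\,\cdot\,)$ gives $\nu_{g_{m}}\geq (c\,t_{m}^{-\dim\g})\,\mathbbm{1}_{B_{m}}dx$ with $B_{m}=\exp(t_{m}\rho B(\g))$, so $c_{m}\vol(B_{m})\geq c>0$ uniformly; substituting $x=\exp(t_{m}W)$ (exponential Jacobian $\to1$) and bounding $\vol(B_{m})-\big|\int_{B_{m}}\tfrac{\chi_{m}}{\chi_{m}(e)}dx\big|$ below by $(1+o(1))\int_{\rho B(\g)}(1-|\psi_{m}(t_{m}W)|)\,dW$, the inequality above yields
\[
1-|z_{m}^{\,n}|\;\gtrsim\;c\int_{\rho B(\g)}\big(1-|\psi_{m}(t_{m}W)|\big)\,dW,\qquad \psi_{m}(Z):=\chi_{m}(\exp Z)/\chi_{m}(e).
\]
So everything reduces to bounding $\int_{\rho B(\g)}(1-|\psi_{m}(t_{m}W)|)\,dW$ away from $0$ uniformly in $m$. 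Writing $\psi_{m}(Z)=d_{m}^{-1}\sum_{j}e^{i\beta_{j}(Z)}$ with $d_{m}=\chi_{m}(e)$, one has $1-|\psi_{m}(Z)|\geq\tfrac12(1-|\psi_{m}(Z)|^{2})=\tfrac1{2d_{m}}\sum_{j}|e^{i\beta_{j}(Z)}-\psi_{m}(Z)|^{2}$, the variance of the eigenvalue cloud of $\pi_{m}(\exp Z)$. I would show this cloud stays genuinely spread out on a definite fraction of $\rho B(\g)$ as follows: from $1-\re(z_{m})=d_{m}^{-1}\sum_{j}(1-\cos(t_{m}\mu_{j}^{(m)}))\leq\tfrac{t_{m}^{2}}{2d_{m}}\sum_{j}(\mu_{j}^{(m)})^{2}$ (where $i\mu_{j}^{(m)}$ are the eigenvalues of $d\pi_{m}(Y_{m})$) together with $1-\re(z_{m})\to1-\re(\omega_{\infty})>0$, the ($t_{m}$-independent) second moment $d_{m}^{-1}\sum_{j}(\mu_{j}^{(m)})^{2}$ is forced to satisfy $t_{m}^{2}\,d_{m}^{-1}\sum_{j}(\mu_{j}^{(m)})^{2}\geq\eta>0$; since the weights of an irreducible representation sum to zero (Weyl invariance) this is a lower bound on the \emph{variance} of the $t_{m}$-scaled weight multiset, and since $X\mapsto\tr\big(d\pi_{m}(X)^{2}\big)$ is a fixed multiple of the Killing form (simplicity of $\g$) the spread propagates from the direction $Y_{m}$ to all $W$. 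The hard part will be converting this mean-zero, variance-$\geq\eta$ weight multiset into the required uniform lower bound: when $t_{m}$ is small relative to the largest weight of $\pi_{m}$ the scaled angles $t_{m}\beta_{j}(W)$ do not wind around the circle and $1-|\psi_{m}(t_{m}W)|\gtrsim t_{m}^{2}\,d_{m}^{-1}\sum_{j}\beta_{j}(W)^{2}\gtrsim\eta\|W\|^{2}$ directly, but in the opposite (winding) regime one needs an equidistribution/decay estimate for $\psi_{m}(t_{m}W)$ that is uniform over the infinitely many $\pi_{m}$. I expect to obtain this either by expanding $\big|\int_{\rho B(\g)}\psi_{m}(t_{m}W)dW\big|^{2}=d_{m}^{-2}\sum_{j,k}\int\!\!\int e^{i(\beta_{j}(t_{m}W)-\beta_{k}(t_{m}W'))}dW\,dW'$ and estimating the weight-indexed oscillatory double integral via the linearity of the $\beta_{j}$ and low moments of the weight distribution, or by invoking the Kirillov--Harish-Chandra asymptotics of characters near $e$ to identify the limit of the rescaled $\psi_{m}$ with a Fourier transform of a limiting rescaled orbital measure and rule out a unimodular constant outright; this representation-uniform oscillatory estimate is where the real work lies.
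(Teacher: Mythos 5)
Your approach is genuinely different from the paper's and, as you yourself flag, it has an acknowledged gap in the crucial case $\chi_m(e)\to\infty$, $g_\infty = e$. Your program --- the convolution identity $\bigl(\chi(g)/\chi(e)\bigr)^n = \int_{G^n}\chi(g_1gg_1^{-1}\cdots g_ngg_n^{-1})\,\chi(e)^{-1}\,dg_1\cdots dg_n$, a lower bound on $1-|z_m^n|$ via the absolutely continuous part of the pushed-forward measure, and then smallness of $\bigl|\int_B\chi_m/\chi_m(e)\bigr|$ --- runs directly into the hard analytic question of uniform decay of the normalized character near the identity, uniformly across all irreducible representations of $G$. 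That estimate is not available off the shelf, and the two routes you sketch (a weight-variance lower bound combined with an oscillatory-integral equidistribution estimate, or Kirillov--Harish-Chandra asymptotics of the rescaled characters) each require substantial development that you have not carried out; you say so explicitly. As it stands the proposal does not prove the lemma.

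The paper sidesteps this completely. Rather than integrating, it works pointwise with finitely many unitary matrices, and the key is that the dependence on $\dim V$ cancels. Theorem \ref{thm:Cnopen} gives a uniform bound $B$ with $e\in C^k$ for every nontrivial class $C$ and every $k\geq B$. A pigeonhole argument on the rotation by $\arg\omega$ produces a $k$ in a fixed range $[B, 2pq]$, depending only on the arc $A$, with $\re\,\omega^k\leq 0$ and hence $|\omega^k-1|\geq\sqrt2$. Picking $y_1,\dots,y_k\in C$ with $y_1\cdots y_k=e$ and setting $P_i=\pi(y_i)$, the eigenvalue identity gives $\|P_i-\omega I\|=\sqrt{2n\delta}$ in the Frobenius norm, a telescoping estimate gives $\|P_1\cdots P_k-\omega^k I\|\leq k\sqrt{2n\delta}$, and since $P_1\cdots P_k=I$ this forces $\sqrt{2n}\leq\sqrt{n}\,|\omega^k-1|\leq k\sqrt{2n\delta}$, i.e.\ $\delta\geq 1/k^2\geq 1/(2pq)^2$. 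No case analysis on degrees, no asymptotics, no measure theory. If you want to salvage your measure-theoretic route you would need a uniform-in-$\pi$ near-identity character bound, which is precisely what the paper's telescoping linear-algebra estimate replaces.
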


\begin{proof}
Let $G$ be adjoint simple. By Theorem \ref{thm:Cnopen}, there exists a bound $B>0$ such that $e\in C^k$ for all nontrivial conjugacy classes $C$ and all $k\geq B$. Let $A$ be a closed arc of the circle with $1\notin A$. Treating $A$ as a closed interval in $(0,1)$, equip $\R/\Z$ with the distance induced from the nearest integer norm $\|x\| := \left\lfloor x+1/2\right\rfloor$ on $\R$. Let $m\in [0,1/2]$ be the distance from $A+\Z$ to $\Z$.  As $A$ is a closed interval and $(A+\Z)\cap\Z$ is empty, $m>0$. Pick $q\in\N$ to satisfy $1/q<m$ and $q>B$. Obviously, $q \geq 2$, so there exists $\delta >0 $ such that every $x$ within $\delta$ of the set \[S = \{t/s, 1\leq t < s\leq q\},\] satisfies $jx \mod 1 \in [1/4,3/4]$ for some $q\leq j\leq 2q$. Now let $p\in \N$ satisfy $1/p<\delta$, and let $x\in A$ be arbitrary. Consider the sequence \[x,2x,\ldots,(q+1)x \mod 1.\] Partitioning $[0,1]$ into $q$ intervals of length $1/q$, by the pigeonhole principle, some pair of points lie in the same interval. Subtracting, we see that there is some integer $k$ between $1$ and $q$ for which $\|kx\|\leq 1/q$, i.e., $kx$ is within distance $1/q$ of its nearest integer.  Recalling $1/q<m$ and $x\in A$ shows $k=1$ is impossible.
Similarly, some term of the sequence 
\[kx,2kx,\ldots pkN \mod 1\] satisfies $\|Lkx\|\leq 1/p$. As $1/p < \delta$,  $|Lkx-n| < \delta$ for some $n\in\Z$, or \[|Lx-n/k|< \delta/k<\delta.\]  
%%%%%%%%%%%%%%%%%%%%%%%%%%%%%%%%%%%%%%
Thus, for every point $x$ in $A$, some term in the sequence
\[x,2x,\ldots, (p+1)x \mod 1\] lies within $\delta$ of $S+\Z$. Thus, if $\omega=e^{2\pi ix}$ lies in $A$, then $\re \omega^k \leq 0$ for some $k$ between $B$ and $2pq$. We claim that $\epsilon = 1/(2pq)^2$ satisfies the statement of the theorem. To this end, let  $(\pi,V)$ be any finite dimensional representation of $G$, $g\in G$ arbitrary. Write $\chi= \tr \pi$ for the character of $\pi$, $C$ for the conjugacy class containing $y$ and $n= \dim V = \chi(e)$ . Suppose 
\begin{equation}
\label{equ: chiis}
\chi(g)= n(1-\delta)\omega
\end{equation}
where $\omega\in A$, $1\geq \delta \geq 0$. As $\omega\in A$, $g\neq e$. By Theorem \ref{thm:Cnopen} and the above derivation, there exists an integer $k$, $B\leq k\leq 2pq$ and points $y_1,y_2,\ldots,y_{k} \in C$ that satisfy  
\begin{equation}
\label{equ:prodcond}
y_1\cdots y_k = e.
\end{equation} and $\re \omega^k \leq 0$. In particular, 
\begin{equation}
\label{equ:omest}
\left|\omega^k-1\right|\geq \sqrt{2}.
\end{equation} 
Write $P_i = \pi(y_i)$; each $P_i$ is unitary and any $P_i,P_j$ are unitarily equivalent. If $\lambda_1,\ldots,\lambda_n$ are the eigenvalues of $P_i$ then 
\[\lambda_1+\cdots+\lambda_n = n\omega(1-\delta),\]
where $\omega$ and the $\lambda_i$ lie on the unit circle, so that
\begin{equation}
\label{equ:eigest}
\sum_{i=1}^n|\omega-\lambda_i| = \sum_{i=1}^n(2-\overline{\omega}\lambda_i +\omega\overline{\lambda_i}) = 2n\delta.
\end{equation}
Consider the Hermitian form $\tr A^*B$ on $M_n(\complex)$ which induces the so-called Frobenius norm \[\|A\|:=\sqrt{\tr(A^{*}A)}.\] This norm satisfies the triangle inequality and is unitarily invariant, i.e., $\|UA\|=\|AU\|=\|A\|$ for all $U\in U_n(\complex)$.  Diagonalizing $P_i$ shows, by equation \eqref{equ:eigest}, that \[\|P_i-\omega I\| = \sqrt{2n\delta}\] for each $P_i$.
Writing $P_1\cdots P_k-\omega I$ as the telescoping sum
\[  P_1\cdots P_k- \omega P_1\cdots P_{k-1}+\omega P_1\cdots P_{k-1}+\cdots-\omega^{k-1}P_1+ \omega^{k-1}P_1 -\omega^k I,\]
we get
\begin{align*}
&\| P_1\cdots P_k-\omega^k I\|\notag \\
= &\| P_1\cdots P_k- \omega P_1\cdots P_{k-1}+\omega P_1\cdots P_{k-1}+\cdots-\omega^{k-1}P_1+ \omega^{k-1}P_1 -\omega^k I\|\notag \\
\leq & \| P_1\cdots P_k- \omega P_1\cdots P_{k-1}\| +\cdots + \|\omega^{k-1}P_1 -\omega^k I\|\notag \\
= & \sum_{i=1}^k\|P_i-\omega I\| 
\end{align*}
or $\| P_1\cdots P_k-\omega^k I\|\leq  k\sqrt{2n\delta}$. 
By equation (\ref{equ:prodcond}), 
\begin{equation}
\label{equ:prodcond2}
P_1 \cdots P_{k} = I. 
\end{equation} 
and by estimate \eqref{equ:omest},  \[ \sqrt{2n}\leq \sqrt{n}\left|\omega^k-1\right|=\| P_1\cdots P_k-\omega^k I\| ,\] whence $1/k^2\leq \delta$. As $k\leq 2pq$, $\delta \geq 1/(2pq)^2=\epsilon$.
\end{proof}
\noindent We are now ready to prove the main result.
\begin{proof}[Proof of Theorem \ref{thm:smalldisk}]
For $0< c < 1$, let $\Theta_c\subset \complex$ be the disk of radius $1-c$ centered at $c+0i$ . Proceeding by contradiction, suppose that for every $c$,  there is a pair $x,\chi$ consisting of an element $x\in G$ and a character $\chi$ of $G$, such that $\chi(x)/\chi(e)$ lies outside $\Theta_c$. Any positive sequence $c_1,c_2\ldots$ that tends to $0$ generates a list of complex numbers $z_1,z_2,\ldots$ coming from normalized character values, such that $z_j$ lies in the crescent \[\Theta_{0}\setminus\Theta_{c_j}= \{z\in\complex \mid |z|\leq1, z\notin \Theta_{c_j}\}.\]  Such a sequence satisfies $z_j \neq 1\ \forall j$ but $|z_j|\to 1$. By compactness, some subsequence of $\{z_j\}$ converges to a point on the unit circle; if this point is not equal to $1$, then the limit point is contained in a closed arc of $S^1$ not containing $1$, contradicting Lemma \ref{thm:closedarc}. So, the only possibility is $z_j\to 1$. 
Writing $z_j$ as $r_ke^{i\theta_j}$ where $-\pi\leq\theta_j<\pi$, then $z_j\to 1$ implies $\theta_j\to 0$ and $z_j\in\Theta_{0}\setminus\Theta_{c_j}$ means $\theta_j\neq 0$. Now, fix $0<c<1$ and parametrize the circular boundary of $\Theta_c$ as
\[\Theta_c(\theta) = (1-c)\cos\theta+c + i(1-c)\sin\theta\]
so that
\[|\Theta_c(\theta)| = \sqrt{1-2c(1-c)(1-\cos\theta)}.\]
The estimate $\cos x \geq 1 - x^2/2$ yields 
\begin{equation}
\label{eq:yep}
|\Theta_c(\theta)| \geq \sqrt{1-c(1-c)\theta^2} \geq1-c(1-c)\theta^2.
\end{equation}
for $\theta$ sufficiently small. In lemma \ref{thm:closedarc} it was proved that if $z=\chi(g)/\chi(1)$ satisfies $\re z^k \leq 0$ for some $k>B$ ($B$ the bound of Theorem \ref{thm:Cnopen}), then $|z|\leq 1-1/k^2$. Since $\theta_j\to 0$ and $\theta_j\neq 0$ there exists $k>B$ and angle $\theta_j$ of $z_j$ for which
\begin{equation}
\label{equ:smallangle}
\frac{\pi}{2(k+1)}\leq |\theta_j| < \frac{\pi}{2k}
\end{equation}
So that $|z_j|\leq 1-1/k^2$. By assumption, $|z_j|>|\Theta_{c_j}(\theta_j)|$ and the estimates \eqref{eq:yep}, \eqref{equ:smallangle} yield the comparison
\[1-\frac{1}{k^2}>1-c_j(1-c_j)\left(\frac{\pi}{2k}\right)^2,\] or
\[\frac{4}{\pi^2}<c_j(1-c_j),\] contradicting $c(1-c)\leq1/4$ for all $0<c<1$.
\end{proof}
\bibliographystyle{abbrv}
\bibliography{researchstatementbib}

\end{document}